\DeclareMathOperator{\arcsinh}{arcsinh}
\newtheorem{theorem}{Theorem}
\newtheorem{remark}{Remark}%
\newtheorem{lemma}{Lemma}
\newtheorem{corollary}{Corollary}
\newtheorem{assumption}{Assumption}
\newtheorem{definition}{Definition}%
\begin{document}

\title[Article Title]{ A column generation approach to exact experimental design}


\author*[1]{\fnm{Selin} \sur{Ahipa\c{s}ao\u{g}lu}}\email{S.D.Ahipasaoglu@soton.ac.uk}

\author[1]{\fnm{Stefano} \sur{Cipolla}}\email{S.Cipolla@soton.ac.uk}

\author[2]{\fnm{Jacek} \sur{Gondzio}}\email{j.gondzio@ed.ac.uk}

\affil*[1]{\orgdiv{School of Mathematical Sciences}, \orgname{University of Southampton}, \orgaddress{\street{Highfield Campus}, \city{Southampton}, \postcode{SO17 1BJ}, \country{UK}}}

\affil*[2]{\orgdiv{School of Mathematics}, \orgname{The University of Edinburgh}, \orgaddress{\street{King's Buildings}, \city{Edinburgh}, \postcode{EH9 3FD}, \country{UK}}}


\abstract{
In this work, we address the exact D-optimal experimental design problem when the number of design vectors is large. First, we propose a customized column generation algorithm to solve the continuous relaxation of the problem. In the approach, each restricted master problem is constructed carefully so that the number of variables stays small and therefore the subproblem can be solved efficiently by a Primal-Dual Interior-Point-based Semidefinite Programming solver. The support of this solution provides a subset of design points to be used in a local search algorithm for the solution of the integer problem. We prove that a local search algorithm  restricted to points of this subset provides an exact design that is provably close to the exact D-optimal design. Our numerical experiments show that, for large-scale instances in which the number of regression points exceeds by far the number of experiments, our approach achieves superior performance compared to existing branch-and-bound-based algorithms in both computational efficiency and solution quality.}

\keywords{Experimental Design, Large Scale Optimization, Column Generation, D-Optimality, Integer Programming}

\pacs[MSC Classification]{62K05, 68T09, 90C06, 90C46, 90C90}

\maketitle

\section{Introduction}\label{sec:intro}

The optimal experimental design problem is fundamental in robust statistics. The simplest version of the problem assumes a framework in which a researcher designs a set of experiments by selecting a finite number of regression vectors from a given candidate set with an objective function that maximizes the expected information gain from the experiments. The experiments can be repeated several times on selected points, while some points in the candidate set might be unused. The total number of experiments (counting repetitions) is determined before the design of the experiments. 

Optimal experimental design has been an integral and critical element of clinical research in which experiments are costly and statistical robustness is highly desired. In this application area, the candidate set is determined by fixing a few factor levels for each control such as the dose of the medication. Often, the results of the experiments are used to train simple supervised models such as polynomial or logistic regression. In these applications, the size of the problem (number of candidate points in the set, number of parameters to be estimated, and number of experiments to be conducted) is relatively small. Therefore, optimal experimental designs (under the D-optimality criterion for linear regression models) can be calculated in a short amount of time, for example, by modeling the problem as a mixed-integer nonlinear problem, which in turn can be solved by existing off-the-shelf solvers (e.g. \cite{MR3396983}), customized branch-and-bound algorithms (e.g., \cite{MR4307385}), or using \textit{more sophisticated} mixed-integer programming techniques (e.g., \cite{MR4774635}). 

Despite these advances, certain instances of the problem remain challenging to solve exactly. These typically arise in machine learning applications and differ from the classical setting in terms of the ratios of the number of parameters to be estimated, the cardinality of the candidate set, and the number of experiments to be conducted.  One such example is subsampling: the selection of smaller subsets of data points from a large dataset as part of the model training process. Applying optimal design principles in the selection of subsamples, referred to as optimal subsampling (see \citep{yao_wang_2021} and references therein for a review), improves the robustness of the learning model while simultaneously decreasing the computational time spent on the training phase. Extracting a small set of relevant points,  tailored for the regression model, also allows for a focused analysis, reduces noise, and minimizes spurious correlations compared to random sampling. 

In this paper, we propose an algorithm to solve large-scale instances of the exact optimal experimental design problem where the number of candidate points is very large, while the number of parameters of the regression model is moderate. We demonstrate that the problem is particularly challenging when the number of experiments is close to the number of parameters to be estimated, in other words, when an exact design with a relatively small support is desired. This is typically the case for optimal subsampling and other machine learning applications.

{The most important feature of our work is the control of the size of the problems that need to be solved in each iteration of the algorithm. Each problem remains small enough to be tackled by second-order methods in a short amount of time, allowing high accuracy solutions even for large scale problems.}

\subsection{Problem formulation} \label{sec:prob_formulation}


Given a finite number of regression points $x_1, . . . , x_m \in \mathbb{R}^n$ that span  
$\mathbb{R}^n$,  
an \emph{exact experimental design (EED) of size $N$} can be defined as a set of
non-negative integers $n_1, \dots, n_m$ such that $\sum_{i=1}^mn_i=N$. For each $i \in \{1,2,\dots,m\}$, the integer $n_i$ corresponds to the number of experiments to be carried out with the vector $x_i$ as the input and $N$ is the total number of experiments. 
The \textit{support of an experimental design} is the set of regression points on which at least one experiment is carried out, that is, $x_i$ is in support if $n_i>0$.  
	

Under the assumption that errors in experiments are identically distributed random variables with mean zero and standard deviation $\sigma$ and a generalized linear model is used in regression, the Fisher information matrix of an EED is independent of the parameters to be estimated and is equal to $\frac{N}{\sigma^2}\sum_{i=1}^m \frac{n_i}{N} x_i x_i^T$. An \emph{exact D-optimal experimental design} maximizes the determinant of this matrix and can be calculated by solving the following nonlinear integer program \citep{MR4307385}:
\begin{equation} \label{eq:ED} 
	\begin{aligned}
		\max &  \; g_0(u):= \ln \det(XUX^T) \\
		 s.t.   &   \; \sum_{i=1}^m u_i = 1,  \\
		         & \; u_i \geq 0   & \hbox{ for } i =1, \dots, m, \\
		         &  \;  u_i N \in \mathbb{Z} & \hbox{ for } i =1, \dots, m, \\
	\end{aligned}
\end{equation}
where  $[u_i]_{i=1}^m := [\frac{n_i}{N}]_{i=1}^m \in \mathbb{R}^m$, $ X:=\begin{bmatrix} 
	x_1 | \cdots | x_m
\end{bmatrix} $, and $U:=diag(u)$.

    In the following, we will refer to the continuous relaxation of problem \eqref{eq:ED} as \emph{the limit problem}. This is also known as \emph{the approximate D-optimal experimental design problem} in the literature. We prefer the first nomenclature, as the continuous version of the problem can be viewed as the limit of problem \eqref{eq:ED} when the number of experiments $N$ goes to infinity. The limit problem is a convex program (note that the objective function to be maximized is a concave function and the feasible region is the unit simplex) and can be solved by any of the known methods for constrained convex programming. Solving the limit problem efficiently is important  in tackling problem \eqref{eq:ED} regardless of the global optimization techniques used for the integer version. We will provide a brief literature review for the limit problem in Section \ref{sec:lit_rev} and present an efficient method for large-scale instances where $m \gg n$ in Section \ref{sec:ColumnGen}.

We have previously mentioned that when problem \eqref{eq:ED} is tackled directly as a mixed-integer nonlinear program, computational limitations are encountered relatively quickly. This is demonstrated in Figure \ref{fig:boscia_study}. This figure presents the boxplots relative to the computational time required to solve $10$ randomly generated instances of \eqref{eq:ED} with  $n=30$, and for each of $m \in \{100,\dots 500\}$ and $N \in \{n, n+5, n+10 \}$ using the method proposed in \cite{MR4774635} (and relative software). To the best of our knowledge, this is one of the state-of-the-art of mixed-integer solvers for the exact D-optimal experimental design problem. The results clearly indicate that instances with $N=n$ are significantly more challenging to solve and that the computational time quickly increases {with $m$}. 
Moreover, the figure clearly shows how the solver performance appears to be highly sensitive to the specific instance considered, although this variability tends to decrease as $N$ increases. This suggests that the problem is particularly challenging when $N$, the number of experiments,  is close to $n$, the number of parameters to be estimated. The current state-of-the-art might not be sufficient to address these instances.

\begin{figure}[h] 
\includegraphics[width=\textwidth]{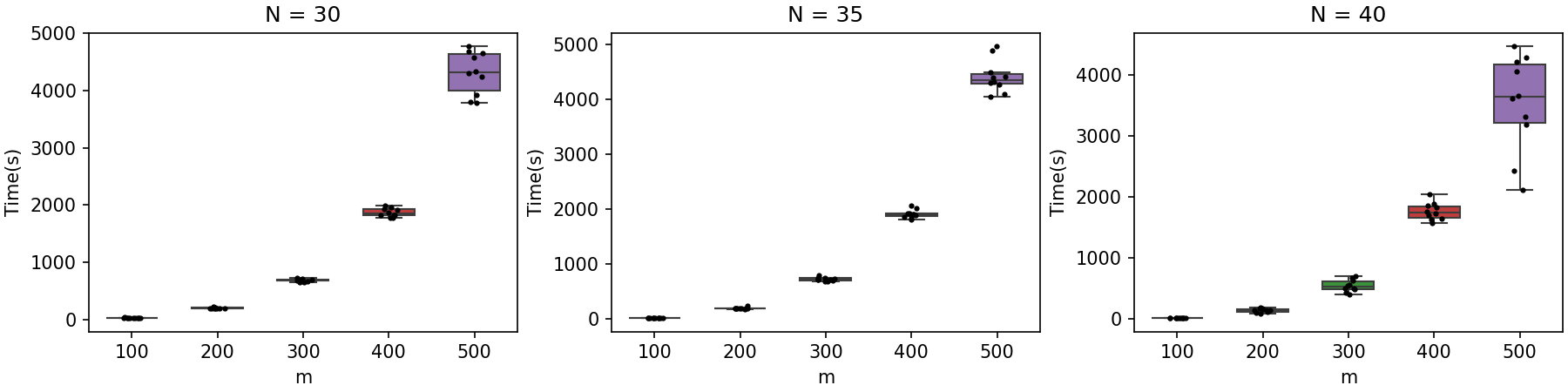}
\caption{Computational times for random instances generated and solved using the software from \cite{MR4774635}.}
\label{fig:boscia_study}
\end{figure}

In this work, we address the limitations of mixed-integer solvers in the regime  $N\approx n$ by proposing a hybrid approach. To be precise, we will consider $n,m,N$ s.t. the following Assumption \ref{ass:m,n} holds.
\begin{assumption} \label{ass:m,n}
$n$ is in general a small integer ($n\leq 50$), $m \gg n$, e.g., $ m \in O(n^c)$ for $c \in \mathbb{N}$, and $N \in \{n, \dots,  n+10\}$.
\end{assumption}

\section{Literature review and contribution}\label{sec:lit_rev}

It is known that the \emph{minimum-volume enclosing ellipsoid} problem is dual to the limit problem and strong duality holds. The duality of these two problems leads to an interesting geometric interpretation of the optimality conditions for both problems. In particular, when a regression vector is in the support of the D-optimal experimental design, it must also be on the surface of the minimum-volume ellipsoid that encloses all the regression vectors in the candidate set.  An early but excellent investigation of the duality of the two problems is provided in \cite{MR2224698}, while \cite{MR2376769} provides a modern view in which the geometric interpretation is extended to approximate optimal solutions. {In particular, when an approximate solution to the limit problem is obtained, an approximate optimal solution to the dual problem can be constructed. The dual solution corresponds to an ellipsoid that does not necessarily enclose all data points. Nevertheless, by scaling it up with an appropriate constant, one can obtain a feasible ellipsoid that encloses all data points. Similarly, scaling it down with another constant excludes points that would be in the support of the optimal solution of the limit problem. 
}

The limit problem and its dual can be solved by a variety of numerical methods to any desired accuracy. {Since the objective function is not defined on the extreme points and some faces of the feasible region, these methods need to be initiated carefully at a point in the domain of the objective function and the subsequent iterates must stay within the domain.} The performance of the algorithms changes significantly with the size and properties of the dataset. {Moreover, it is also important to note that most of those employing merely first-order information  are highly sensitive to the initial solution (See comparison of initialization methods in \cite{MR2158428, MR2376769}.)}

Most of the widely used and analyzed first-order methods for the limit problem are variants of the Frank-Wolfe (FW) algorithm. {One of the best performing variants is the Frank-Wolfe algorithm with away steps \cite{guelat1986away} with initial solution chosen according to the algorithm in \cite{MR2158428}. In this, the algorithm is started at a solution with small cardinality. The next iterate is chosen by either increasing the weight of the solution at a regression point that is not covered by the dual ellipsoid (an ordinary Frank-Wolfe step) or decreasing the weight of one of the points that is in the current support (Wolfe's away step). In both cases, the FW method can be viewed as a coordinate descent algorithm. The optimal step size is calculated in closed-form by solving a one-dimensional maximization problem  where the objective function corresponds to the linearization of the original objective function at the current iterate and constraints make sure that the next iterate stays in the intersection of the unit simplex and the domain of the objective function.  The objective function value and the gradient at the next iterate are calculated efficiently due to the fact that the Fisher information matrix at the new iterate is a rank-one update over the current one. (Details of this MVEE-tailored FW variant can be found in \cite{MR2376769}.)} 
These algorithms are terminated when a primal-dual pair with a small optimality gap is obtained and have desirable convergence properties especially when the algorithm is started with an iterate with a small support following the initialization strategy suggested in \cite{MR2158428} and Wolfe's away steps are used as in \cite{MR2348357,MR2376769}. In particular, \cite{MR2376769,MR2158428} show that a $\delta$-approximate solution to the limit problem can be obtained in $\mathcal{O}(n \ln n + \frac{n}{\delta})$ steps. This is also the size of the support of the output of the algorithm. Furthermore, this global convergence result, which holds for any starting point {in the domain of the objective function}, provides a pessimistic view {as the FW method with away steps performs significantly better in practice than this bound suggests}. It was proven in \citep{MR2376769} that there exist data-dependent constants $P$ and $Q$ such that the number of steps is only  $\mathcal{O}(P+ Q {\ln (\delta^{-1})})$. I.e., this method is locally linearly convergent and the number of iterations does not depend on the number of data points in the neighborhood of the optimal set. {The global linear convergence of the algorithm has recently been established in \cite{Zhao2025AwayStepFW}, using a key observation that the objective function is a logarithmically-homogeneous self-concordant barrier. This is an interesting development as the known global linear convergence property of the method for smooth objective functions (e.g., \cite{lacoste-julien2015global}) doesn't apply to the D-optimal design problem whose objective function is not smooth at all points of the feasible region.} Furthermore, the gradient of the objective function evaluated at a feasible solution provides critical information on identifying regression vectors that cannot be in the support of an optimal solution following the discussion in \cite{MR2339022}. Including regular checkpoints to eliminate such points from the dataset greatly improves the efficiency of first-order methods and makes solving large-scale instances of the problem possible, as demonstrated in \cite{ThesisAhi}. This is important since each iteration of the algorithm takes $\mathcal{O}(mn)$ operations where $m$ is the number of regression vectors in the candidate set and reducing its size leads to significant gains in computational time.
To tackle the challenges posed by ``tall" datasets - those too large to fit into working memory - \cite{MR4387269} introduced the Big Index Batching (BIB) algorithm, which iteratively optimizes over a small subset of points, uses the elimination rule  from \cite{MR2339022} to delete points, and then adapts the subset by adding new points from the larger dataset. The BIB algorithm emphasises minimizing data reads and is coupled with a first-order method to find the optimal solution for each batch. Although the authors do not mention this, the BIB algorithm can be viewed as a column generation or active-set method as discussed in Section  \ref{sec:ColumnGen}.

Alternatively, the limit problem can also be tackled using higher-order methods, e.g., \cite{MR2091768} uses the dual reduced Newton method together with active set strategies, which works well for moderate problem sizes. In addition, \cite{MR3396983} formulates the problem as a second-order cone program and utilizes MOSEK, and in \cite{MR2061575} an SDP formulation is given which can be solved directly using CVX. The choice between first-order and higher-order methods remains a relevant research question. A comparative analysis of several first and second methods is provided in \cite{MR4655115} together with a classification of the difficulty of the data sets using kurtosis as the relevant metric. Their work also includes an implementation of an active-set method, drawing inspiration from the general strategy outlined in \cite{MR462607} - a seminal paper published in 1978. The availability of their Python codes facilitates further research and benchmarking.
While second-order methods demonstrate potential, as highlighted by \cite{MR4655115,MR3396983}, their scalability often presents a significant bottleneck for very large-scale problems. Similarly, solving the limit problem using semidefinite programming (SDPs) or second-order cone programming (SOCPs) techniques, is only possible when the problem size is small due to their high computational complexity.
Here, we provide an algorithm that uses column generation and elimination methods to reduce the size of the problem to a scale that is tractable by SDP solvers. In particular, we solve the smaller subproblems using a barrier method for the Linear Matrix Inequality Representation of the problem. This enables us to use a higher-order method for the subproblems, while using first-order information to generate the subproblems in each step of the column generation algorithm.

Although the problem formulation (\ref{eq:ED}) is an integer program that is known to be NP-hard, the use of greedy methods for its solution has been widespread with good performance in practice.
Early approaches are mostly variants of the two-exchange heuristic that iteratively swaps pairs of points until a 2-optimal solution is reached. A well-established version is provided in \cite{MR2323647}, with even earlier origins in Federov's work \cite{MR403103}.  While computationally efficient, these heuristics do not guarantee global optimality. Also, when they are applied to the full dataset, they take considerable amount of time for large scale instances of the problem. The first attempts at exact solutions include a simple branch-and-bound algorithm provided by \cite{MR653110}, nevertheless, this method can handle only very small instances of the problem.
    
Modern research into exact D-optimal design can broadly be categorized into three main lines: (i) customized or generic branch-and-bound (BaB) implementations, (ii) methods based on quadratic programming (QP) approximations, and (iii) Mixed-Integer Second-Order Cone Programming (MISOCP) reformulations that leverage off-the-shelf solvers. There appears to be a growing recent interest in this area, potentially driven by advancements in solver technology and algorithmic frameworks for mixed-integer nonlinear programming. 

A customized BaB algorithm is provided by \cite{MR4307385}, where subproblems correspond to bounded versions of the limit problem 
and its dual. The subproblems are solved using a customized Frank-Wolfe (FW) algorithm with Wolfe's away steps, where the algorithm is initiated using solutions of the parent node, and the bounds on the variables are respected in each iteration of the FW algorithm. More recently, \cite{MR4774635} have used Boscia.jl, a contemporary algorithmic framework to solve nonlinear integer programs using BaB. This framework also employs FW algorithms for solving node relaxations to approximate optimality, sharing similarities with the approach {introduced in \cite{MR4307385}} but potentially benefiting from more efficient solver components or step-size rules. Similarly, \cite{ponte2023branchandbounddoptimalityfastlocal}  presented a BaB algorithm that features variable tightening techniques derived from the dual of bounded subproblems, similar to those in {\cite{MR4307385,MR4774635}}. Additionally, the BaB given in \cite{ponte2023branchandbounddoptimalityfastlocal} incorporates spectral and Hadamard bounds with promising results. The relevance of these as cutting planes for related problems such as the Minimum Volume Enclosing Ellipsoid (MVEE) problem could be further investigated. They also employ several local search methods, including a 2-exchange algorithm and a randomized SVD-based approach, to improve lower bounds within their Julia-based implementation, which solves subproblems with Knitro within the Juniper BaB solver, tackling problems with around 100 candidate points.

In the domain of QP-based approximations, \cite{MR4102952} proposed a method that constructs a quadratic approximation of the D-optimality objective function in the neighborhood of an optimal solution of the limit problem. This leads to a quadratic integer program (IQP) which is solved using Gurobi. Later, \cite{MR4102952} introduced the AQuA (Ascent with Quadratic Assistance) algorithm, which employs IQP or MIQP-based approximations. While AQuA can handle a large number of candidate points in low-dimensional settings, it lacks theoretical guarantees and has reportedly shown variable performance.

MISOCP reformulations offer another avenue for exact solutions. The first MISOCP formulation for the exact D-optimal design was provided in \cite{MR3396983}, which was solved utilizing CPLEX via PICOS. Their work highlighted a graph-theoretic interpretation related to maximum spanning trees and noted that the inclusion of linear constraints significantly accelerates the solution process. They provided computational comparisons against the 2-exchange heuristic mentioned above on block-design and chemical kinetics datasets. 

The collective efforts indicate a drive towards scalable and provably optimal methods, leveraging both problem-specific insights and general advancements in mathematical optimization solvers and frameworks. However, very large instances of the problem are beyond the reach of any of the exact methods discussed above. Therefore, development of approximation algorithms with provable optimality guarantees is a recent area of interest. In this regard, \cite{madan2019combinatorial} has shown that the 2-exchange algorithm and other greedy heuristics are
asymptotically optimal when $N\gg n$, which is different from the assumptions we have in this work. 

It is also worth mentioning related work on other optimality criteria. For example, \cite{MR4307385} studies the BaB algorithm for Kiefer's optimality criteria in general and provides the necessary details for the exact A-optimal design, where the objective function is $(trace(XUX^T)^{-1})^{-1}$. The MISOCP formulations of \cite{MR3396983} also include A-optimality. In addition, \cite{MR4759553} recently formulates the exact A-optimal design problem as a Mixed-Integer Linear Program (MILP) with McCormick relaxations, demonstrating the broader applicability of mixed-integer programming to optimal design problems beyond D-optimality.


\subsection{Contribution and organization}

Our contribution focuses on the efficient computation of exact D-optimal designs in settings where the dimension $n$ is moderate, but the number of candidate design points $m$ is much larger. In such a regime, {a method that requires partial or full} enumeration, e.g., a Branch-and-Bound algorithm, is computationally infeasible. Broadly speaking, exploiting the fact that the cardinality of the support of D-optimal designs remains moderate independently from the number of data points, i.e., exploiting the sparsity of the continuous relaxation, our method combines rapid support identification - implemented via a column generation strategy integrated with an Interior-Point-based SDP solver - with a local search algorithm restricted to the identified support. On the one hand, this approach is theoretically justified, as it produces solutions with the same quality guarantees for the objective function as applying the local search algorithm to the entire dataset. On the other hand, our numerical experiments demonstrate that the framework proposed here can compute high-quality approximate solutions for the exact D-optimal design at scales never achieved before. More in detail, our contribution is organized as follows:

\begin{itemize}
    
\item  In Section \ref{sec:ColumnGen}, we frame and reinterpret the approach proposed in \cite{MR4387269} for computing the minimum-volume ellipsoids in the context of Column Generation. In our formulation, the master problem maintains a restricted set of design points, while the pricing subproblem identifies the most informative point to be added. 
This perspective elucidates the algorithmic framework and related convergence properties presented in \cite{MR4387269}, anchoring the method within the established Linear Programming literature while still acknowledging and recognizing the unique challenges and peculiarities brought by the MVEE problem. Moreover, to solve the restricted master problems at each iteration, we propose and justify the use of an Interior-Point-based method, namely SDPT3 \cite{MR1976479}, for computing high-accuracy solutions. This feature enables the rapid identification of the optimal support. {It is interesting to note that our proposal extends the applicability of primal-dual IPM based SDP solvers for the computation of minimum volume ellipsoids for datasets with millions of data points.} 

\item In Section \ref{HPconstant}, we seek provable approximation guarantees for local search algorithms when the pairs to be considered for exchange are restricted to the support of the solution of the continuous relaxation.  In particular, we prove that a local search approach for the computation of exact D-optimal design solutions, when applied solely to the identified support set, achieves the same error bounds as when applied to the entire dataset. This significantly reduces computational cost while preserving theoretical guarantees on the worst-case estimate.

\item In Section \ref{sec:numerics}, we demonstrate the practical advantages of our approach, proposing a broad spectrum of numerical results. The considered large-scale \textit{Synthetic and Real World} datasets are suitably modified to represent challenging instances of the exact D-optimal design problem. And indeed, the results presented there confirm the efficiency and robustness of our approach, showcasing how our proposed computational framework is able to consistently outperform state-of-the-art first-order methods \cite{MR2376769, MR3522166} for the solution of the MVEE and state-of-the-art mixed-integer based solvers \cite{MR4307385, MR4774635} for the exact D-optimal design problem in terms of computational time and quality of computed solutions. 

{Notably, as already mentioned for the MVEE setting,  we also provide experimental evidence demonstrating the computational limitations of monolithic application of IPM-based solvers and how the Column Generation framework proposed in this paper addresses these by decomposing the problem into smaller restricted master problems: maintaining SDPT3's efficiency while achieving scalability to datasets with millions of candidate points.}

In the exact D-optimal design setting, the  numerical results {presented in Section \ref{sec:numerics}}
show that our method reliably produces solutions whose objective values match those from a mixed-integer programming approach, while running orders of magnitude faster, thereby enabling high-quality exact D-optimal designs at scales previously considered unattainable, cf. Figures  \ref{fig:boscia_study} and \ref{fig:colgen_study}. { We acknowledge that the computational times in such comparison should be interpreted with appropriate caveats: Boscia is an exact solver that provides globally optimal solutions, whereas our proposal is a heuristic method with worst-case approximation guarantees.}

\begin{figure}[h] 
\includegraphics[width=\textwidth]{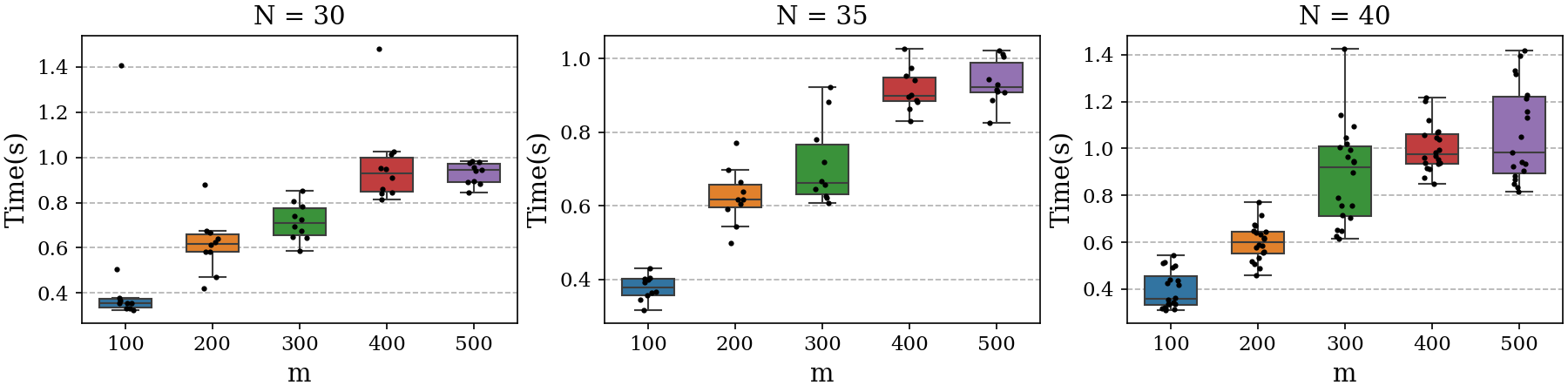}
\caption{Computational times for the same random instances of  Figure  \ref{fig:boscia_study} solved using our proposal.}
\label{fig:colgen_study}
\end{figure}
\end{itemize}


\section{Column generation} \label{sec:ColumnGen}
In this section, always under Assumption \ref{ass:m,n}, we neglect the integrality constraints $u_iN \in \mathbb{Z}$ and study possible approaches to solve the limit problem. We start by briefly reviewing the related duality theory.
\subsection{Duality}
 We refer the interested reader to \cite[Ch. 2]{MR3522166} from which this session is borrowed. Let us consider the following convex problem:

\begin{equation} \label{eq:ED_dual_1} \tag{DMP}
	\begin{aligned}
		\min_{H \succ 0} &  \; f_0(H):=    \ln \det H^{-1}  \\
		s.t.   &   \; x_i^THx_i \leq n,  & \hbox{ for } i =1, \dots, m,\\
	\end{aligned}
\end{equation}
which is known as the Minimum Volume Enclosing Ellipsoid problem (MVEE) in the literature. For $u \geq 0$, consider the Lagrangian function:

\begin{equation}\label{eq:Lag}
	L_0(H, u):=   \ln \det H^{-1} +\sum_{i=1}^m u_i(x_i^THx_i-n),
\end{equation}
where $u_i$ corresponds to the multiplier of the $i^{th}$ constraint. Using the Lagrangian in \eqref{eq:Lag}, the KKT conditions of problem \eqref{eq:ED_dual_1} can be stated as

	\begin{align}
		x_i^THx_i & \leq n,  & \hbox{ for } i =1, \dots, m,   \label{KKT_1}   \tag{PF} \\
		u_i  & \geq 0, & \hbox{ for } i =1, \dots, m,   \label{KKT_2} \tag{DF} \\
		u_i (x_i^THx_i -n) & = 0, & \hbox{ for } i =1, \dots, m,   \label{KKT_3} \tag{Comp}  \\
		\nabla_HL_0(H,u)= -H^{-1}+XUX^T&= 0.  \label{KKT_4} \tag{Stat}
	\end{align}
\begin{lemma}\label{lem:min+lag}
	We have that
	\begin{equation}\label{eq:min_lagrangian}
		\min_{H} L_0(H,u)=\ln \det(XUX^T) +n(1-\sum_{i=1}^mu_i).
	\end{equation}
\end{lemma}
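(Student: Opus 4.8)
The plan is to minimize $L_0(\cdot,u)$ over the (implicit) domain $H\in\mathbb{S}^n_{++}$ by a direct first-order calculation, recognizing that the stationarity equation is exactly \eqref{KKT_4}. First I would rewrite the quadratic terms in trace form: since $x_i^THx_i=\operatorname{trace}(Hx_ix_i^T)$, summing gives
\begin{equation*}
\sum_{i=1}^m u_i\,x_i^THx_i=\operatorname{trace}\!\Big(H\sum_{i=1}^m u_i x_i x_i^T\Big)=\operatorname{trace}\big(H\,XUX^T\big),
\end{equation*}
so that $L_0(H,u)=-\ln\det H+\operatorname{trace}(H\,XUX^T)-n\sum_{i=1}^m u_i$.

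Next I would argue that, as a function of $H$ on the open convex cone $\mathbb{S}^n_{++}$, the map $H\mapsto-\ln\det H+\operatorname{trace}(H\,XUX^T)$ is strictly convex, and (when $XUX^T\succ0$) coercive — the $-\ln\det H$ term diverges to $+\infty$ as $H$ approaches the boundary of the cone, while the linear term forces growth as $\|H\|\to\infty$ — hence its unique global minimizer is characterized by vanishing gradient. Using $\nabla_H(-\ln\det H)=-H^{-1}$ and $\nabla_H\operatorname{trace}(H\,XUX^T)=XUX^T$, the first-order condition reads $-H^{-1}+XUX^T=0$, i.e. $H^\star=(XUX^T)^{-1}$, which is precisely \eqref{KKT_4}. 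Substituting back,
\begin{equation*}
L_0(H^\star,u)=-\ln\det\big((XUX^T)^{-1}\big)+\operatorname{trace}\big((XUX^T)^{-1}XUX^T\big)-n\sum_{i=1}^m u_i=\ln\det(XUX^T)+n-n\sum_{i=1}^m u_i,
\end{equation*}
which is \eqref{eq:min_lagrangian}.

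The main obstacle is the degenerate case in which $XUX^T$ is singular (which can happen since $u\ge0$ is not required to have full-dimensional support): then $-\ln\det H+\operatorname{trace}(H\,XUX^T)$ is no longer coercive in $H$, the infimum is $-\infty$, and $(XUX^T)^{-1}$ does not exist. I would handle this by noting the identity still holds in the extended sense, since in that case $\ln\det(XUX^T)=-\infty$ as well (taking a sequence $H=t\,v v^T+I$ with $v$ in the kernel of $XUX^T$, $t\to\infty$, drives $L_0(H,u)\to-\infty$). For the purposes of the rest of the paper one may simply restrict to $u$ with $XUX^T\succ0$, in which case the computation above is complete and rigorous; this is the regime in which \eqref{eq:min_lagrangian} is subsequently used.
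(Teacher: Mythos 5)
Your proposal is correct and follows essentially the same route as the paper: both minimize the Lagrangian via the stationarity condition $H^{-1}=XUX^T$ from \eqref{KKT_4} and substitute back, using $\operatorname{trace}(HXUX^T)=n$ to evaluate the linear term. You add two things the paper leaves implicit --- the convexity/coercivity argument showing the stationary point is the global minimizer, and the degenerate case $XUX^T\not\succ 0$ where both sides equal $-\infty$ --- which strengthens rather than changes the argument.
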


\begin{proof}
	
	We have
	
	\begin{equation*}
		 -\ln\det H=\ln\det H^{-1}
	\end{equation*}
	and hence,  using \eqref{KKT_4}, i.e., $XUX^T=H^{-1}$, we have that $$-  \ln\det H =  \ln \det (XUX^T).$$
	Moreover, always using \eqref{KKT_4} in second part of \eqref{eq:Lag}, we have ${\rm{trace}}(HXUX^T)=n$, which proves hence \eqref{eq:min_lagrangian}. 
\end{proof}

Using Lemma \ref{lem:min+lag}, we have hence that the dual problem of \eqref{eq:ED_dual_1}  is exactly the continuous relaxation of \eqref{eq:ED} (limit problem), i.e.,

\begin{equation} \label{eq:ED_no_integer_constraints}  \tag{MP}
	\begin{aligned}
		\max &  \; g_0(u)   \\
		s.t.   &   \; \sum_{i=1}^m u_i = 1,  \\
		& \; u_i \geq 0   & \hbox{ for } i =1, \dots, m. \\
	\end{aligned}
\end{equation}

\begin{remark}
It is important to note that, in the derivation above, for the sake of simplicity, we derived \eqref{eq:ED_no_integer_constraints} as dual of \eqref{eq:ED_dual_1}.
On the other hand, in the following, we refer to and interpret \eqref{eq:ED_no_integer_constraints} as the {Primal} Master Problem and  \eqref{eq:ED_dual_1} as the Dual Master Problem. This is fully justified by the fact that strong duality holds, see also Theorem \ref{th:existence_uniqueness} below.
\end{remark}

\begin{remark} For any primal dual feasible points $(u,H)$, weak duality follows also observing that
	\begin{equation*} \label{eq:duality_gap} 
		\begin{split}
			  &  -\ln\det H-\ln\det(XUX^T) = \\
			&  -\ln \det ( H XUX^T) = \\
            &  - \ln (\prod_{i=1}^{n} \lambda_i) \geq - n \ln ( \sum_{i=1}^{n} \lambda_i /n ) = - n \ln (n/n)=0,
		\end{split}
	\end{equation*}
 where we used the arithmetic-geometric mean inequality and the fact that  $\sum_{i=1}^n\lambda_i={\rm{trace}} (HXUX^T)=n$.  
\end{remark}

We are now ready to state the main theorem of this section that clarifies the existence and uniqueness of solutions of \eqref{eq:ED_no_integer_constraints}-\eqref{eq:ED_dual_1}, see \cite[Th. 2.2]{MR3522166}.

\begin{theorem} \label{th:existence_uniqueness}
If $X$ has full rank, then \eqref{eq:ED_dual_1} has a unique optimal solution $H^*$, \eqref{eq:ED_no_integer_constraints} has an 
optimal solution $u^*$ with $XU^*X^T$, and $f_0(H^*) = g_0(u^*)$.
\end{theorem}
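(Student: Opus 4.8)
The plan is to settle existence and uniqueness for the dual master problem \eqref{eq:ED_dual_1} first, and then to transfer optimality to \eqref{eq:ED_no_integer_constraints} by reading off a Lagrange multiplier from the KKT system \eqref{KKT_1}--\eqref{KKT_4} and invoking Lemma \ref{lem:min+lag} to close the gap.

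\emph{Step 1 (a minimizer of \eqref{eq:ED_dual_1} exists and is unique).} I would show that the feasible set $\mathcal F := \{\, H \succeq 0 : x_i^T H x_i \le n,\ i=1,\dots,m \,\}$ is compact. It is closed, being the intersection of the PSD cone with finitely many affine inequalities in $H$. It is bounded because full rank of $X$ gives $M := \sum_{i=1}^m x_i x_i^T \succ 0$, so any $H \in \mathcal F$ satisfies $\lambda_{\min}(M)\,\mathrm{trace}(H) \le \mathrm{trace}(HM) = \sum_{i=1}^m x_i^T H x_i \le mn$, whence $\|H\| \le \mathrm{trace}(H) \le mn/\lambda_{\min}(M)$. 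On $\mathcal F$ the objective $f_0(H) = -\ln\det H$ is continuous where $H \succ 0$ and tends to $+\infty$ as $\det H \to 0^+$, while $\varepsilon I \in \mathcal F$ for small $\varepsilon > 0$ gives a finite value; hence the infimum is finite and attained at some $H^* \in S^n_{++}$. Uniqueness follows from strict convexity of $-\ln\det$ on $S^n_{++}$ together with convexity of $\mathcal F$.

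\emph{Step 2 (extract a feasible multiplier for \eqref{eq:ED_no_integer_constraints}).} Slater's condition holds for \eqref{eq:ED_dual_1} --- take $H = \varepsilon I$ with $\varepsilon$ small enough that $x_i^T H x_i < n$ for every $i$ --- so the KKT conditions are necessary and sufficient, and there is $u \ge 0$ satisfying the stationarity equation \eqref{KKT_4}, $-(H^*)^{-1} + XUX^T = 0$, and complementarity \eqref{KKT_3}. From \eqref{KKT_4} we get $XUX^T = (H^*)^{-1}$, so $\mathrm{trace}(H^* XUX^T) = n$; expanding this trace as $\sum_i u_i\, x_i^T H^* x_i$ and using \eqref{KKT_3} to replace each $u_i\, x_i^T H^* x_i$ by $n u_i$ yields $\sum_i u_i = 1$. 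Hence $u$ is feasible for \eqref{eq:ED_no_integer_constraints}; put $u^* := u$, so that $XU^*X^T = (H^*)^{-1} \succ 0$.

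\emph{Step 3 (close the duality gap).} Because $L_0(\cdot, u^*)$ is convex in $H$ and $H^*$ satisfies its stationarity condition \eqref{KKT_4}, $H^*$ minimizes $L_0(\cdot, u^*)$; hence by Lemma \ref{lem:min+lag}, $L_0(H^*, u^*) = g_0(u^*) + n(1 - \sum_i u^*_i) = g_0(u^*)$. On the other hand, complementarity \eqref{KKT_3} gives $L_0(H^*, u^*) = f_0(H^*) + \sum_i u^*_i(x_i^T H^* x_i - n) = f_0(H^*)$. Thus $f_0(H^*) = g_0(u^*)$, and combining this with the weak-duality inequality $f_0(H) \ge g_0(u)$ recorded in the Remark above shows $g_0(u^*) \ge g_0(u')$ for every feasible $u'$, so $u^*$ is optimal for \eqref{eq:ED_no_integer_constraints}. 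The step I expect to be the main obstacle is Step 1: one must use full rank of $X$ both to make $\mathcal F$ bounded and --- via the blow-up of $-\ln\det$ at singular matrices --- to guarantee that the minimizer stays in $S^n_{++}$, where the objective is differentiable and strictly convex. Once $H^* \in S^n_{++}$ is secured, Steps 2 and 3 are routine manipulations with the KKT identities and Lemma \ref{lem:min+lag}.
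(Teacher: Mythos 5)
Your proof is correct. Note that the paper itself does not prove this theorem --- it cites it as \cite[Th.~2.2]{MR3522166} --- so there is no in-paper argument to compare against; your write-up supplies the omitted proof using exactly the machinery the section has already set up (the KKT system \eqref{KKT_1}--\eqref{KKT_4}, Lemma~\ref{lem:min+lag}, and the weak-duality remark). The three steps are all sound: compactness of the feasible set via $\lambda_{\min}(\sum_i x_i x_i^T)\,\mathrm{trace}(H)\le \mathrm{trace}(HM)\le mn$ together with the blow-up of $-\ln\det$ at singular matrices gives existence of $H^*\in S^n_{++}$, strict convexity gives uniqueness, Slater's condition (via $\varepsilon I$) legitimises the multiplier extraction, and the trace-plus-complementarity computation correctly yields $\sum_i u_i^*=1$ and closes the gap. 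One cosmetic remark: the theorem statement's phrase ``with $XU^*X^T$'' is evidently a typo in the paper (presumably for $XU^*X^T=(H^*)^{-1}$), and your Step~2 establishes precisely that identity, so the intended content is covered.
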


\vspace{0.2cm}

\begin{definition}
 Throughout this work,  $({u}^*, {{H}}^*)$  will denote a primal-dual solution of~\eqref{eq:ED_no_integer_constraints}~-~\eqref{eq:ED_dual_1}, i.e., it will satisfy \eqref{KKT_1}-\eqref{KKT_4}.
\end{definition}

\subsection{Column generation for the limit problem}
We start the description of the column generation approach for solving the limit problem and its dual by introducing the \textit{Restricted Master Primal-Dual problems.} The Primal Restricted Master Problem (RMP) is obtained as the restriction of \eqref{eq:ED_no_integer_constraints} to a subset $\bar{M}\subset \{1,\dots,m\}=:M$, i.e.,

\begin{equation} \label{eq:ED_Primal_Restricted} \tag{RMP}
	\begin{aligned}
		\max &  \; \ln \det(X\bar{U}X^T)  \\
		s.t.   &   \; \sum_{i \in \bar{M} } \bar{u}_i = 1,  \\
		& \; \bar{u}_i \geq 0   & \hbox{ for } i  \in \bar{M}. \\
	\end{aligned}
\end{equation}
It is important to note that $\bar{u} \in \mathbb{R}^{|\bar{M}|}$ and, hence, that the dimension and complexity of the solution of \eqref{eq:ED_Primal_Restricted} depends on $|\bar{M}|$. For this reason, to fix ideas, we can assume $|\bar{M}| \in O(n)$ and $|\bar{M}| \geq (n+1)$. Using the  discussion carried out in the previous section, we can see that the dual of \eqref{eq:ED_Primal_Restricted} has again the form

\begin{equation} \label{eq:ED_dual_Restricted}  \tag{DRMP}
	\begin{aligned}
		\min &  \; -  \ln \det(\bar{H})  \\
		s.t.   &   \; x_i^T\bar{H}x_i \leq   n,  & \hbox{ for } i  \in \bar{M}. \\
	\end{aligned}
\end{equation}
We will use, what is common in the column generation scheme, 
a \textit{pricing} procedure \cite{MR2980569,MR2193875} to identify all (or a subset of) violated dual constraints. Such constraints will be appended to the set $\bar{M}$. Let us assume that primal-dual solution $(\bar{u}^*,\bar{{H}}^*)$ of \eqref{eq:ED_Primal_Restricted}-\eqref{eq:ED_dual_Restricted} has been computed, and find 
%

\begin{equation}\label{eq:entering_support}
	z(\bar{H}^*):= \max_{i \in M} \{ 0, x_i^T\bar{H}^*x_i-n \} .
\end{equation}
Following on the discussion on the geometric interpretation of optimality conditions above, 
$\bar{{H}}^*$ determines the current approximation of the MVEE which contains all points in the index set $\bar{M}$, while the points $x_i$ such that $x_i^T\bar{H}^*x_i-n>0$ lie outside this ellipsoid, and therefore, correspond to constraints of (DMP) which are violated. Moreover, $z(\bar{H}^*)>0$ implies the existence of an index $\bar{i}$ such that the point $x_{\bar{i}}$ lies outside of the ellipsoid defined by $\bar{H}^*$.  We also observe that every optimal solution  $\bar{u}^*$ of the problem \eqref{eq:ED_Primal_Restricted} corresponds to a feasible solution $u$ of problem \eqref{eq:ED_no_integer_constraints}, where $u_i=\bar{u}^*_i$ for $i \in \bar{M}$, and $u_i=0$ for $i \in M \setminus \bar M$. Hence 
$$\ln \det(\bar{X}\bar{U}^*\bar{X}^T) = \ln det(XUX^T)  \leq \ln \det(X{U}^*X^T),$$ 
where $\bar{X}$ is the submatrix of $X$ built of columns $i \in \bar M$ only. 
Moreover, since $\bar{u}_i(n-x_i^T\bar{H}^*x_i  ) =0$ for all $i \in \bar{M}$, using the definition above, we have $u_i(n-x_i^T\bar{H}^*x_i  ) =0$ for all $i \in M$. Hence $\bar{H}^*$  is an optimal solution of (MP) iff ~$z(\bar{H}^*)~=~0$.

Using the ideas introduced until now, we are finally ready to present the full details of a column generation approach  for the solution of problems \eqref{eq:ED_no_integer_constraints}-\eqref{eq:ED_dual_1}, see Algorithm \ref{alg:column_generation_Pure}.  Such a framework generalizes the column generation approach usually used in  linear programming, see, e.g.  \cite{MR2980569}, to the nonlinear problem in \eqref{eq:ED_no_integer_constraints}.

\begin{algorithm}
	\caption{Column Generation for \eqref{eq:ED_no_integer_constraints} }
	\label{alg:column_generation_Pure}
	\begin{algorithmic}[1]
		\Procedure{ColumnGeneration}{}
		\State Initialize $u$ using the technique proposed in  \cite{MR2158428};
		\State Initialize $\bar{M}:=\{ i \in M \hbox{ s.t. }  u_i>0 \} $;
        \State {Solve \eqref{eq:ED_Primal_Restricted}-\eqref{eq:ED_dual_Restricted} restricted to  $\bar{M} $ to produce initial $(\bar{u}^*,\bar{{H}}^*)$;}
		\While{$ z(\bar{H}^*) :=	\max_{i \in M} \{ 0, x_i^T\bar{H}^*x_i-n \}  > 0$};
		\State {Find}  $\bar {i}$ s.t. $ \bar{i} \; { \in } \; \arg\max_{i \in M} x_{i}^T\bar{H}^*x_{i}-n$; \label{algline:max_violation}
        \State Define  $\bar{M} =  \bar{M} \cup \{ \bar{i}\}$; \label{algline:support}
		\State Solve \eqref{eq:ED_Primal_Restricted}-\eqref{eq:ED_dual_Restricted} restricted to  $\bar{M} $ to produce a new $(\bar{u}^*,\bar{{H}}^*)$; \label{algline:IPM}
		\EndWhile
        \State For $i \in \bar{M}$ set $u^*_i := \bar{u}_i^*$ and $u^*_i:=0$ otherwise. Also $H^*:=\bar{H}^*$.
		\EndProcedure
	\end{algorithmic}
\end{algorithm}
The convergence of Algorithm \ref{alg:column_generation_Pure} is analyzed in  Theorem \ref{th:conv_pure}.

\begin{theorem} \label{th:conv_pure}
	Algorithm \ref{alg:column_generation_Pure} converges to a Primal-Dual optimal solution in at most $m$ iterations.
\end{theorem}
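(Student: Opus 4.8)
The plan is to separate the statement into a \emph{termination} claim---the while loop is entered at most $m$ times---and a \emph{correctness} claim---the pair returned on exit solves the KKT system \eqref{KKT_1}--\eqref{KKT_4}, hence is primal-dual optimal for \eqref{eq:ED_no_integer_constraints}--\eqref{eq:ED_dual_1} by Theorem \ref{th:existence_uniqueness}.

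\textbf{Termination.} First I would argue that every pass through the while loop strictly increases $|\bar M|$. At the top of an iteration we hold a primal-dual optimal pair $(\bar u^*,\bar H^*)$ of \eqref{eq:ED_Primal_Restricted}--\eqref{eq:ED_dual_Restricted}; its existence and uniqueness follow from Theorem \ref{th:existence_uniqueness} applied to the submatrix $\bar X$, which has full column rank because the initialization of \cite{MR2158428} provides a spanning support of size at least $n+1$ and the algorithm only ever adds indices. Feasibility of $\bar H^*$ for \eqref{eq:ED_dual_Restricted} means $x_i^T\bar H^*x_i\le n$ for all $i\in\bar M$, whereas the index $\bar i$ picked on line \ref{algline:max_violation} attains $z(\bar H^*)$, which is strictly positive inside the loop; hence $x_{\bar i}^T\bar H^*x_{\bar i}>n$ and so $\bar i\notin\bar M$. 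Therefore line \ref{algline:support} enlarges $\bar M$ by exactly one index. Since $\bar M\subseteq M$ and $|M|=m$, the loop is entered at most $m$ times (more precisely, at most $m$ minus the cardinality of the initial $\bar M$, hence at most $m-n-1$); and should $\bar M$ reach $M$, the restricted pair becomes \eqref{eq:ED_no_integer_constraints}--\eqref{eq:ED_dual_1} itself, forcing $z(\bar H^*)=0$ and exit.

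\textbf{Correctness.} On exit $z(\bar H^*)=0$, i.e.\ $x_i^T\bar H^*x_i\le n$ for every $i\in M$, so \eqref{KKT_1} holds. Set $u_i=\bar u_i^*$ for $i\in\bar M$ and $u_i=0$ otherwise; then $\sum_{i=1}^m u_i=1$ and $u\ge 0$, giving \eqref{KKT_2}. The stationarity condition for the restricted pair is $\bar X\bar U^*\bar X^T=(\bar H^*)^{-1}$, and since $u$ vanishes off $\bar M$ we get $XUX^T=\bar X\bar U^*\bar X^T=(\bar H^*)^{-1}$, which is \eqref{KKT_4}. Restricted complementarity gives $\bar u_i^*(x_i^T\bar H^*x_i-n)=0$ for $i\in\bar M$, and $u_i=0$ for $i\notin\bar M$, so $u_i(x_i^T\bar H^*x_i-n)=0$ for all $i\in M$, which is \eqref{KKT_3}. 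All four conditions hold, so $(u,\bar H^*)$ is primal-dual optimal; this just repeats, now with an iteration count attached, the observation made immediately before Algorithm \ref{alg:column_generation_Pure} that $z(\bar H^*)=0$ certifies optimality.

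\textbf{Where the work is.} There is essentially no hard step: the crux is the one-line observation that a violated dual constraint cannot already belong to $\bar M$, which yields strict growth of $|\bar M|$ and hence the bound $m$. The only thing needing care is that each restricted problem is well-posed, i.e.\ that $\bar X$ stays full rank throughout---guaranteed by the initialization---together with the routine check that ``$z=0$'' translates into the KKT system. It is also worth remarking in the write-up that the bound $m$ is generous: the tight count is $m$ minus the size of the initial support.
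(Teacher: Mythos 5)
Your proof is correct and follows essentially the same route as the paper's: termination because a violated index $\bar i$ cannot already lie in $\bar M$ (dual feasibility of $\bar H^*$ on $\bar M$ forces $x_i^T\bar H^* x_i\le n$ there), so $|\bar M|$ grows strictly and the loop runs at most $m$ times, and correctness by padding $\bar u^*$ with zeros and verifying the KKT system. You are merely more explicit than the paper on both halves; the only nit is that the spanning initialization gives $\bar X$ full \emph{row} rank $n$, not full column rank.
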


\begin{proof}
	Let us define $g^*$ as the optimal value of \eqref{eq:ED_no_integer_constraints}. Let us denote with $(\bar{u}^*,\bar{{H}}^*)$ the solution of  \eqref{eq:ED_Primal_Restricted}-\eqref{eq:ED_dual_Restricted} obtained at Line \eqref{algline:IPM} of Algorithm  \ref{alg:column_generation_Pure}. Two situations may occur:
	\begin{itemize}
		\item  If $z(\bar{H}^*) > 0$, then there exists  $  \bar{i}  \in \{1, \dots, m\}  \hbox{ s.t. }  x_{\bar{i}}^T \bar{H}^*x_{\bar{i}} -n >0  $.  Since $\bar{i}$ is added to $\bar{M}$, the corresponding dual constraints will not be violated in the next iterations. Therefore, it guarantees the progress of the algorithm. Also, this case can only happen at most $m$ times.
        \item  If $z(\bar{H}^*) = 0 $, then Algorithm \ref{alg:column_generation_Pure} stops. In this case, consider $(\bar{u}^*,\bar{{H}}^*)$ to be a solution of the Primal-Dual pair \eqref{eq:ED_Primal_Restricted}-\eqref{eq:ED_dual_Restricted}. An optimal solution $({u}^*,{{H}}^*)$ of  \eqref{eq:ED_no_integer_constraints}-\eqref{eq:ED_dual_1} is obtained by defining ${u_{i}^*}=\bar{u}_i^*$ for $i \in \bar{M}$ and  ${u_{i}^*}=0$ for $i \in M \setminus \bar{M}$ and ${{H}}^*=\bar{{H}}^*$.
	\end{itemize}
 \end{proof}

\begin{remark}
 A similar proof to the one proposed for Theorem \ref{th:conv_pure} holds when $n_0>1$ points violating the dual constraints are added to the set $\bar{M}$. In that case, Algorithm \ref{alg:column_generation_Pure} converges in at most $\lceil m/n_0 \rceil $ iterations. Moreover, it is easy to see that the convergence proof still holds if at Line~\ref{algline:max_violation} of Algorithm \ref{alg:column_generation_Pure} the $\arg\max$ is substituted by any $\bar {i}$ s.t. $ x_{i}^T\bar{H}^*x_{i}-n>0$, i.e., any point that violates the dual-constraints.
\end{remark}

\vspace{0.1cm}

It is important to note, at this stage, that since the cardinality of $\bar{M}$ increases at every step of Algorithm \ref{alg:column_generation_Pure}, the dimension of the problems \eqref{eq:ED_Primal_Restricted}-\eqref{eq:ED_dual_Restricted}  to be solved, increases at every step. This issue represents one of the main computational bottlenecks and,  in the next section, we will show how it can be overcome for the particular case of D-optimal design thanks to the use of the Harman-Pronzato constant \cite{MR2339022} that identifies which points in set $M$ can be completely dropped from the data set since these cannot be in the support of the optimal solution of (MP).




\subsection{On the use of Harman-Pronzato constant \cite{MR2339022}}
\label{HPconstant} 

Let us define the constant

$$h_n( \varepsilon ):= n\left(1+\frac{\varepsilon}{2}  - \frac{\sqrt{\varepsilon(4+\varepsilon -4 /n ) } } {2} \right), $$
where $\varepsilon := \max_{i  = 1,\dots, m} x_i^THx_i-n $ and $H=(XUX^T)^{-1}$ for any given primal feasible $u \geq 0$. In \cite[Th. 2]{MR2339022}, it is proven that a point $x_{j}$ that satisfies $x_j^THx_j<h_n( \varepsilon )$ 
can not be a support point of a D-optimal design, i.e., s.t. $u_j>0$. {Observe that the quantity $\varepsilon$ is a function of the matrix $H$, and thus must be recomputed for each dual solution $H$. For notational simplicity, however, we suppress this dependence and write $\varepsilon$ instead of $\varepsilon(H)$.}

\begin{definition}
    In the following, given $H$, we will say that a point $x$ satisfies the Harman-Pronzato condition, in short HP-condition, if $x^THx<h_n( \varepsilon )$.
\end{definition}

\begin{remark} \label{rema:Pronzato}
Using that $n \geq h_n(\varepsilon)$ for all $\varepsilon>0$, see \cite{MR2339022}, we have that
\begin{equation*}
	x^THx -n \leq x^THx -h_n(\varepsilon).
\end{equation*}
 Hence 
 \begin{equation*}
     \begin{split}
       &  x^THx -n > 0 \Rightarrow  x^THx -h_n(\varepsilon) > 0,  \\
       & x^THx -h_n(\varepsilon) < 0 \Rightarrow x^THx -n  < 0.
     \end{split}
 \end{equation*}
The above implications show that all points $x$ that are not covered by the current estimated MVEE defined by $H$, i.e., $x^THx -n >0$ will fail the condition. Whereas all points that satisfy the HP-condition, i.e., $x^THx<h_n( \varepsilon )$, lie indeed in the interior of the currently estimated MVEE. At the same time, the implications above also identify a set of points such that $ x^THx -n  < 0 \leq   x^THx -h_n(\varepsilon)$. These are of particular relevance because they lie in the interior of an estimated ellipsoid $H$ but are close to its boundary and potentially might become active in the MVEE defined by the optimal solution $H^*$, see Figure \ref{fig:pictorial-Pronzato}.  

{Note that when $\varepsilon = 0$, this is equivalent to the geometric interpretation of the optimality conditions and points in the interior of the optimal ellipsoid satisfy the HP-condition. }

\begin{figure}[ht]
\centering
\begin{tikzpicture}[scale=0.90]
	
	\draw[thick, red] (0,0) ellipse (3cm and 2cm);  
	\draw[thick, blue, fill=white] (0,0) ellipse (2cm and 1cm);  
	
    \draw (-1.8,-1.7) node[left] {{$x^T H x > h_n(\varepsilon)$}}; 
	\draw  (0,0.5) node[below] {{$x^T H x < h_n(\varepsilon)$}};
	\draw  (0,2) node[below] {\textcolor{red}{$x^T H x <n$}};
	\draw  (0.6,-0.91)  node[below] {{$x^T H x = h_n(\varepsilon)$}}; ;
    \draw  (2.9,0.5)  node[right] {\textcolor{red}{$x^T H x = n$}}; 
    \draw  (2.8,-1) node[right] {\textcolor{red}{$x^T H x > n$}};

    \begin{scope}[transparency group, opacity=0.1]
		\fill[blue, even odd rule] 
			(-5,-4) rectangle (6,4)  
			(0,0) ellipse (2cm and 1cm);  
	\end{scope}

    \begin{scope}[transparency group, opacity=0.3]
		\fill[pattern=spaced north east lines, pattern color=red, even odd rule] 
			(-5,-4) rectangle (6,4)  
			(0,0) ellipse (3cm and 2cm);  
	\end{scope}

\end{tikzpicture}
\label{fig:pictorial-Pronzato}
\caption{ {Pictorial representation of the regions identified by the HP-condition (see Remark \ref{rema:Pronzato}). The figure illustrates three distinct regions in the design space based on the quadratic form $x^T H x$. The inner region (white), bounded by the blue ellipse $x^T H x = h_n(\varepsilon)$, represents designs where $x^T H x < h_n(\varepsilon)$. The light blue shading outside the blue ellipse corresponds to designs satisfying $h_n(\varepsilon) < x^T H x$. The exterior region (red diagonal lines) represents designs where $x^T H x > n$. These regions partition the design space according to $H$ with $h_n(\varepsilon)$ serving as a threshold parameter that depends on  $\varepsilon := \max_{i  = 1,\dots, m} x_i^THx_i-n $. }}
\end{figure}

\end{remark}

In light of Remark \ref{rema:Pronzato}, it  is natural to modify Line  \ref{algline:support} of Algorithm \ref{alg:column_generation_Pure} using the information provided by the HP-condition, i.e., substituting such line with 

\begin{equation*}
	\bar{M} =  (\bar{M} \cup \{ \bar{i}\}) \cap  \bar{J},
\end{equation*}	
where $\bar{J} :=\{ i \in M \hbox{ s.t. }  x_i^T\bar{H}^*x_i \geq h_n(\varepsilon) \} $. Indeed, points s.t. $ x^THx -h_n(\varepsilon) < 0$ can be safely discarded from $\bar{M}$, reducing hence the dimension of the \eqref{eq:ED_Primal_Restricted}-\eqref{eq:ED_dual_Restricted} to be solved. On the other hand, as per the discussion in Remark \ref{rema:Pronzato}, $\bar{M}$  defined as above could still contain points s.t. $x^THx -n< 0$ (especially in the initial phases of the algorithm). This might still represent a computational issue/bottleneck and could lead to sets $\bar{M}$ having unnecessarily high number of points.  To prevent this and guarantee an efficient solution of problems \eqref{eq:ED_no_integer_constraints}-\eqref{eq:ED_dual_1},  we consider the following Algorithm~\ref{alg:column_generation}, a reformulation of the Big Index Batching Algorithm, see \cite[Alg. 1]{MR2339022}, where the dimensions of the \eqref{eq:ED_no_integer_constraints}-\eqref{eq:ED_dual_1} to be solved -- the batch size -- is not upper bounded. In this framework, the points which are detected as non-support for the currently estimated MVEE defined by $\bar{H}^*=(X\bar{U^*}X^T)^{-1}$, i.e., points $x_{i}$ such that $\bar{u}_i^*=0$, are eliminated from $\bar{M}$ reducing hence the dimension of the problem \eqref{eq:ED_Primal_Restricted}-\eqref{eq:ED_dual_Restricted} to be solved in the next iteration.  In particular, given  $\bar{M}$ and the corresponding $(\bar{u}^*,\bar{{H}}^*)$, we define the next $\bar{M}$ as follows:

\begin{enumerate}
	\item Define  $\bar{A}:=\{ i \in \bar{M} \hbox{ s.t. }  \bar{u}^*_i >0  \};$
    \item Define the new $\bar{M}$ as
	$$\bar{M}= (\bar{I} \cup  \bar{A}), $$
	where  $\bar{I}$ is the set of  the $n_0$ most violated dual constraints (we assume that there are at least $n_0$ of such constraints). 
\end{enumerate}


Moreover, defining $\bar{J} :=\{ i \in M \hbox{ s.t. }  x_i^T\bar{H}^*x_i \geq h_n(\varepsilon) \} $, we know that  
the points in $\bar{J}^C$ can be safely eliminated from the dataset, see Remark \ref{rema:Pronzato}. Using Remark \ref{rema:Pronzato} again, we note that at Line  \ref{algline:infeasibility} of Algorithm \ref{alg:column_generation}, it holds
$$ x_{i}^T\bar{H}^*x_{i}-n >0 \Rightarrow  x_{i}^T\bar{H}^*x_{i}-h_n(z(\bar{H}^*))>0,$$ 
which implies that the indexes $i$ corresponding to violated dual constraints  belong indeed to $\bar{J}$.

\begin{algorithm}
	\caption{Column Generation for \eqref{eq:ED_no_integer_constraints} {with HP elimination}}
	\label{alg:column_generation}
	\begin{algorithmic}[1]
			\Procedure{ColumnGeneration}{}
			\State  Initialise $M_P=M$
			\State Choose $n_0 \in \mathbb{N}$;
			\State Choose an initial $\bar{M} \subset M_P$ using  \cite{MR2158428};
			\State Initialize $z(\bar{H}^*)=+\infty$;
			\While{$ z(\bar{H}^*)    > 0$} \label{algline:stopping_criterion}
			\State Solve \eqref{eq:ED_Primal_Restricted}-\eqref{eq:ED_dual_Restricted} restricted to  $\bar{M} $ to produce $(\bar{u}^*,\bar{{H}}^*)$ \label{algline:Primal_Dual_Sol}
			 \State Define $\bar{A}:=\{ i \in \bar{M} \hbox{ s.t. }  \bar{u}^*_i >0  \} $ \label{algline:supports}
			\State Define $\bar{I}:=\{ i \in M_P \hbox{ s.t. }  n_0 \hbox{ largest elements } x_{i}^T\bar{H}^*x_{i}-n>0  \}$ \label{algline:infeasibility}
			\State Define $\bar{M}= (\bar{I} \cup  \bar{A}) $
			\State 	Compute $z(\bar{H}^*)=\max_{i \in M_P} \{ 0, x_i^T\bar{H}^*x_i-n \}$
			\State Define $h_n(z(\bar{H}^*))$   as in \cite[Th. 2]{MR2339022} \label{algline:Pronzato_constant} 
			\State Define $\bar{J} :=\{ i \in M_P \hbox{ s.t. }  x_i^T\bar{H}^*x_i \geq h_n(z(\bar{H}^*)) \} $ 
			\State Define $M_P:=M_P \setminus \bar{J}^C$ \label{algline:Pronzato_removal} 
			\EndWhile
			\State For $i \in \bar{M}$ set $u^*_i := \bar{u}_i^*$ and $u^*_i:=0$ otherwise. Also $H^*:=\bar{H}^*$.
			\EndProcedure
		\end{algorithmic}
\end{algorithm}
We are finally able to prove the finite convergence of Algorithm \ref{alg:column_generation}.  The proof of finite convergence is partially borrowed from \cite{MR2339022}. 

\begin{theorem}
Algorithm \ref{alg:column_generation} terminates in a finite number of iterations.
\end{theorem}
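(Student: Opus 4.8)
The plan is to combine three facts: (i) the restricted‑master objective increases monotonically along the iterates and is bounded above; (ii) whenever the currently most violated dual constraint has a strictly positive residual, the next restricted master improves the objective by a strictly positive amount that depends only on that residual; and (iii) the sequence of dual iterates $\bar H^*$ — and hence of the quantities $z(\bar H^*)$ tested in the \textbf{while} condition — takes only finitely many values. Throughout I write $g_t:=\ln\det(X\bar U_t^*X^T)$, $\bar H_t^*$, $\bar M_t$, $M_P^{(t)}$ for the objects produced at the $t$‑th pass through the loop, and $\delta_t:=z(\bar H_t^*)$.

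First I would record the invariants of Algorithm~\ref{alg:column_generation}. Complementary slackness for \eqref{eq:ED_dual_Restricted} together with $h_n(\varepsilon)\le n$ gives $\bar A_t\subseteq\{i:x_i^T\bar H_t^*x_i=n\}\subseteq\bar J$, while Remark~\ref{rema:ponzato} gives $\{i:x_i^T\bar H_t^*x_i-n>0\}\subseteq\bar J$; since the updated $M_P$ equals exactly $\bar J$, it follows that $\bar M_{t+1}=\bar A_t\cup\bar I_t\subseteq M_P^{(t+1)}$, that no index violating a dual constraint is ever discarded from $M_P$, and — by induction, with base case the initialisation of \cite{MR2158428} — that $\{x_i:i\in\bar A_t\}$, and a fortiori $\{x_i:i\in\bar M_t\}$, spans $\mathbb R^n$ (because $X\bar U_t^*X^T=(\bar H_t^*)^{-1}\succ 0$, so $g_t>-\infty$). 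Consequently Theorem~\ref{th:existence_uniqueness}, applied to the subdataset indexed by $\bar M_t$, shows that $\bar H_t^*$ is the \emph{unique} optimal solution of \eqref{eq:ED_dual_Restricted} on $\bar M_t$; hence $\bar H_t^*$ depends only on $\bar M_t$, and $\delta_t$ depends only on the pair $(\bar M_t,M_P^{(t)})$. As these pairs range over the finite set $2^M\times 2^M$, the set $\{\delta_t:t\ge 0\}$ is finite.

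Next, monotonicity and boundedness. Because $\bar M_{t+1}\supseteq\bar A_t=\operatorname{supp}(\bar u_t^*)$, the vector $\bar u_t^*$ (padded with zeros and viewed on $\bar M_{t+1}$) is feasible for the restricted master at step $t+1$, so $g_{t+1}\ge g_t$; extending $\bar u_t^*$ by zeros over all of $M$ shows $g_t\le g^*$, the optimal value of \eqref{eq:ED_no_integer_constraints}, finite by Theorem~\ref{th:existence_uniqueness}. Hence $(g_t)$ converges. For the progress bound, suppose $\delta_t>0$ and let $\bar i_t$ attain the maximum in \eqref{eq:entering_support}; then $\bar i_t\in M_P^{(t)}$ and, being the single largest residual, $\bar i_t\in\bar I_t\subseteq\bar M_{t+1}$, so $(1-\lambda)\bar u_t^*+\lambda e_{\bar i_t}$ is feasible at step $t+1$ for every $\lambda\in[0,1)$. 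Using $(\bar H_t^*)^{-1}=X\bar U_t^*X^T$ and the rank‑one determinant update, maximising over $\lambda$ yields
\begin{equation*}
g_{t+1}-g_t\;\ge\;\phi(\delta_t):=\max_{\lambda\in[0,1)}\Big[(n-1)\ln(1-\lambda)+\ln\big(1+\lambda(n-1+\delta_t)\big)\Big]\;>\;0 ,
\end{equation*}
the strict inequality for $\delta_t>0$ holding because the bracketed function has derivative $\delta_t$ at $\lambda=0$; this is the standard one‑step improvement bound for the D‑optimality objective, cf.\ \cite{MR2376769,MR2158428}.

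Finally I would argue by contradiction. If the algorithm never terminated, then $\delta_t>0$ for all $t$, so each $\delta_t$ lies in the finite set $\{\delta_s:s\ge 0\}\cap(0,\infty)$ and $\rho:=\inf_{t\ge 0}\phi(\delta_t)$ is a minimum of finitely many positive numbers, hence $\rho>0$; but then $g_{t+1}-g_t\ge\rho$ for all $t$ forces $g_t\to\infty$, contradicting $g_t\le g^*$. Therefore $\delta_t=0$ for some finite $t$, and the \textbf{while} loop exits. The step I expect to be the main obstacle is making the progress bound do its job: one must verify not merely that $\phi(\delta_t)\ge 0$ but that $\phi(\delta_t)>0$ for every strictly positive residual, and — equally delicate — that the index $\bar i_t$ realising $\delta_t$ is genuinely available to the next restricted master, i.e.\ $\bar i_t\in\bar M_{t+1}$, which is where the bookkeeping among $\bar I_t$, $\bar A_t$, $\bar J$ and $M_P^{(t)}$ must be handled carefully; the remaining pieces — monotonicity, uniqueness of $\bar H_t^*$, and the finiteness count — are then routine.
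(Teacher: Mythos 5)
Your proof is correct, and its skeleton --- argue by contradiction, combining monotonicity of the restricted-master values with the finiteness of the number of subsets of $M$ --- is the same as the paper's. The difference lies in how the contradiction is extracted. The paper first sets aside the elimination step based on the constant $h_n(\varepsilon)$ (noting it never removes a point of the optimal support), observes that adding a point lying outside the current ellipsoid forces $\bar{g}_{t+1}>\bar{g}_t$, and concludes at once: a strictly increasing sequence of objective values, each attached to a subset of the finite set $M$, cannot be infinite. You instead make the strict increase quantitative via the rank-one line-search bound $\phi(\delta_t)>0$ (the standard one-step improvement for the D-optimality objective), invoke the uniqueness of $\bar{H}_t^*$ from Theorem \ref{th:existence_uniqueness} to argue that only finitely many residuals $\delta_t$ can ever occur, deduce a uniform positive lower bound on the per-iteration progress, and contradict the upper bound $g_t\le g^*$. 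Your route requires more bookkeeping --- the invariants guaranteeing that the most violated index $\bar{i}_t$ and the whole support $\bar{A}_t$ survive into $\bar{M}_{t+1}$, and the spanning property needed for uniqueness --- but it buys an explicit per-step progress estimate and makes rigorous the strict inequality $\bar{g}_{t+1}>\bar{g}_t$ that the paper asserts without proof. Both arguments are valid and rest on the same two pillars; yours is the more self-contained of the two.
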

\begin{proof}
	The first important observation is that  Line  \ref{algline:Pronzato_removal} of Algorithm \ref{alg:column_generation} does not discard any point of the support at optimality, hence, it can be ignored. We consider then the version of Algorithm \ref{alg:column_generation} where $M_P$ is always equal to $M$ and where the lines connected to the use of the Harman-Pronzato constant are ignored (Lines \ref{algline:Pronzato_constant} -\ref{algline:Pronzato_removal}). 
	To show that Algorithm \ref{alg:column_generation} terminates in a finite number of iterations, let us argue by contradiction. Suppose that Algorithm \ref{alg:column_generation} as just described,  does not terminate in a finite number of steps. Then there exists an infinite  sequence $\{ \bar{A}_i \}_{i \in \mathbb{N}}$ of computed supports (see Line \ref{algline:supports}). Let us denote by $\{ \bar{g}_i\}_{i \in \mathbb{N}}$ the corresponding function values of the function $g_0(u)$. As at every iteration we are adding at least one point outside the current estimated ellipsoid, see Line \ref{algline:infeasibility},  we have that $\bar{g}_{i+1}>\bar{g}_{i}$, i.e., $\bar{g}_i$ is a strictly increasing sequence. The contradiction follows from observing that there exists only a finite number of possible subsets of the given dataset. 
\end{proof}

{
As a final remark in this section, we note that at Line~\ref{algline:Primal_Dual_Sol} of Algorithm~\ref{alg:column_generation}, we did not specify the particular solver employed for addressing the restricted problems~\eqref{eq:ED_Primal_Restricted}–\eqref{eq:ED_dual_Restricted}. In our numerical experiments, we adopt a Primal-Dual Interior-Point Method with a stringent accuracy tolerance (see Section~\ref{sec:solution RMP} for further details). However, it is worth emphasizing that the pricing step remains valid even if these restricted problems are solved inexactly. Similarly, the HP criterion used at Line~\ref{algline:Pronzato_constant} does not require any kind of optimality, as the constant $h_n(\varepsilon)$ is well-defined for any primal feasible point of~\eqref{eq:ED_no_integer_constraints}.
This observation suggests that one may consider using approximate solutions to reduce the computational footprint associated with solving~\eqref{eq:ED_Primal_Restricted}–\eqref{eq:ED_dual_Restricted}. While this can improve the overall efficiency of the method, it may come at the cost of a slower HP-based column elimination, as inexact solutions can lead to less aggressive identification of the points that can not be on the support of the MVEE.
}
{
Furthermore, in practice, the stopping condition at Line~\ref{algline:stopping_criterion} is modified to $z(\bar{H}^*) > n\delta$. And indeed, when the condition $z(\bar{H}^*) \leq n\delta$ is satisfied, 
the output of the algorithm $({u}^*,{H}^*)$ satisfies the {following $\delta$-primal feasibility condition of \cite{MR2376769}} for \eqref{eq:ED_no_integer_constraints}-\eqref{eq:ED_dual_1}:
	\begin{align*}
		x_i^THx_i & \leq (1+\delta) n,  & \hbox{ for } i =1, \dots, m,   \\
		u_i  & \geq 0, & \hbox{ for } i =1, \dots, m,    \\
		XUX^T&= H^{-1}.  
	\end{align*}
{This implies that the optimality gap is at most $n\delta$, see Lemma 2.1, \cite{MR2376769}.}

\section{Local search algorithms for exact D-optimal design} \label{sec:local_search}
In this section, we present the local search approach considered to produce an approximate integer solution exploiting a computed primal-dual solution $(u^*,H^*)$ of \eqref{eq:ED_no_integer_constraints}-\eqref{eq:ED_dual_1}, i.e., the limit problem.
Let us define the support of $u^*$ as $$S:=\{ i \in 1,\dots,m \, : {u_{i}^*}>0  \}$$ and $\phi_{Rel}:=g_0(u^*)=f_0(H^*)$. By definition of support $S$ and using the fact that $(u^*,H^*)$ is the primal-dual solution of \eqref{eq:ED_no_integer_constraints}-\eqref{eq:ED_dual_1}, the optimal objective function values of the following two problems are equal 

\begin{equation} \label{eq:primal_dual_on_support}
    \begin{aligned}
		\max &  \; g_0(u) \\
		s.t.   &   \; \sum_{i \in S} u_i = 1,  \\
		& \; u_i \geq 0   \;\; \hbox{ for } i \in S, \\
	\end{aligned} \quad \quad {\mbox { and }} \quad \quad  \begin{aligned}
		\min &  \; f_0(H) \\
		s.t.   &   \;  x_i^THx_i \leq   n,  & \hbox{ for } i  \in S ,\\ \\
	\end{aligned}
\end{equation}
and are, in turn, equal to $g_0(u^*)=f_0(H^*)$. Problem \eqref{eq:primal_dual_on_support} coincides, indeed, with \eqref{eq:ED_dual_1}-\eqref{eq:ED_no_integer_constraints} where the original set $M$ has been substituted by the support $S$.

We will exploit the equality in \eqref{eq:primal_dual_on_support}  to apply a local search algorithm for the solution of the exact D-optimal design problem, where the original set $M$ is substituted by the possibly  \textit{much smaller} set $S$ in order to produce an approximate integer solution.

Before we present and analyze the local search algorithm considered here, we recall that there may exist a vector $\hat{u}^* \neq u^*$ such that
\[
\Bigl(\sum_i \hat{u}^*_i\,x_i x_i^T\Bigr)^{-1} = H^*,
\]
i.e., the support $S$ of the minimizers does not have to be uniquely determined. 
In fact, different algorithms are very likely to find different 
optimal supports. For example, if the limit problem is solved using the FW algorithm without Wolfe's away steps initialized with a uniform distribution, i.e., with all identical $u_i$, then the algorithm will fail in identifying an optimal solution with sparse support in finitely many steps. In contrast, if the algorithm is initialised with a sparse vector and Wolfe’s away steps are employed, then the resulting solution may indeed exhibit a sparse support. 
Indeed, in general, the cardinality of the support depends on the problem dimension, the used algorithmic framework and the target accuracy of the solution, while remaining independent of the total number of points. As discussed in Section \ref{sec:lit_rev}, \cite{MR2158428} shows that an optimal solution with a small support can be obtained with a careful initialization.

At first glance, this ambiguity might seem to be quite problematic when applying a local search approach to exploit the support of a computed primal solution $u^*$. A first clarifying observation in this regard is that, even though there might be multiple optimal solutions to the primal problem, they correspond to the same (unique) dual solution $H^*$.
Therefore, we can use any of such primal solutions to determine a support set $S$. Interestingly enough, exploiting the uniqueness of $H^*$, we will prove that the local exchange phase (which will be described next, see Algorithm \ref{alg:local-search-d-design}), produces an approximate integer solution having worst-case guarantees not depending on the particular primal optimal solution used -- and its corresponding support. 


\subsection{Bounds Based on a local search algorithm} \label{sec:bounds_local_search}
In this section, we show that, when applying the local search algorithm (see Algorithm~\ref{alg:local-search-d-design}) to the exact D-optimal design problem, it is sufficient to restrict the search to the support $S$ of an optimal solution to the limit problem. In particular, we show that the approximation guarantee obtained by running the algorithm on~$S$ is identical to that achieved when the algorithm is executed over the full data set~$M$. Given the combinatorial nature of the local search procedure, this reduction is expected to yield significant computational savings in cases where $|M| \gg |S|$.


 The proof of this result follows easily from the analysis in \cite{madan2019combinatorial}; for completeness, we briefly outline the main steps in the following. To this aim, consider the local search procedure described in Algorithm~\ref{alg:local-search-d-design}, which iteratively replaces elements in the current multi-subset $I$ with elements from the set $S$, provided such exchanges lead to an increase in the determinant of the current candidate solution (see Line~\ref{algline:exchange}). The use of multi-set notation permits repeated inclusion of indices $i \in S$, and we denote by $n_i$ the multiplicity of each index $i \in I$. The initialisation of $I$ is based on an optimal solution $u^*$ of the limit problem. Specifically, $I$ is constructed by including $n_i$ copies of each index $i \in S$, where $n_i$ is a suitable integer approximation of $N u_i$ satisfying the condition $\sum_{i \in S} n_i = N$.

\begin{algorithm}
\caption{Local search algorithm for $D$-optimal experimental design}
\label{alg:local-search-d-design}
\begin{algorithmic}[1]
\State \textbf{Input:} $S$,  $I$ any multi-subset of $S$ of size $N$ such that $
   G \;=\; \sum_{i \in I} x_i\,x_i^T$ is a non-singular matrix.

\While{there exist $i \in I$ and $j \in S$ such that 
\[
   \det\bigl(G - x_i\,x_i^T + x_j\,x_j^T\bigr) \;>\; \det(G)
\]} \label{algline:exchange}
    \State $G \gets G - x_i\,x_i^T + x_j\,x_j^T$
    \State $I \gets \bigl(I \setminus \{i\}\bigr) \cup \{j\}$
\EndWhile
\State \textbf{Output:} $(G,I)$
\end{algorithmic}
\end{algorithm}

\begin{definition}
    For any $
   G \;=\; \sum_{i \in I} x_i\,x_i^T$  and for all $i,j \in \{1,\dots,m\}$, let us define $$\tau_i:= x_{i}^TG^{-1} x_{i} \hbox{ and } \tau_{ij}:= x_{i}^TG^{-1} x_{j}.$$
\end{definition}

In the following Lemma \ref{lem:claim1}, we state an important property satisfied by $\tau_i$ and $\tau_{ij}$ defined above.

\vspace{0.1cm}

\begin{lemma} [see Claim 1 in \cite{madan2019combinatorial}] \label{lem:claim1}
    Let $(G,I)$ denote the output of Algorithm \ref{alg:local-search-d-design}. Then for all $i \in I$ and $j \in S$ it holds
\begin{equation*}
    \tau_j -\tau_i\tau_j +\tau_{ij}\tau_{ji} \leq \tau_{i}.
\end{equation*}
    
\end{lemma}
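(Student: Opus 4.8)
The plan is to convert the termination condition of Algorithm~\ref{alg:local-search-d-design} into the stated algebraic inequality by means of determinantal update formulas. Since $(G,I)$ is the \emph{output} of the algorithm, the while-loop has terminated, hence no improving swap exists: for every $i\in I$ and $j\in S$ we have $\det(G-x_ix_i^T+x_jx_j^T)\le\det(G)$ (with equality when $j=i$, via Line~\ref{algline:exchange}). The whole argument then reduces to expanding the left-hand determinant in terms of $G^{-1}$ and cancelling $\det(G)$.

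First I would note that $\det(G)>0$ at termination. This holds at initialisation by the input assumption that $G=\sum_{i\in I}x_ix_i^T$ is non-singular, and it is preserved throughout the execution because a swap is performed only when it \emph{strictly increases} the determinant. In particular $G^{-1}$ exists, so the quantities $\tau_i=x_i^TG^{-1}x_i$ and $\tau_{ij}=x_i^TG^{-1}x_j$ are well defined, and dividing by $\det(G)$ below preserves the direction of the inequality.

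Next I would compute $\det(G-x_ix_i^T+x_jx_j^T)$ as a rank-two perturbation of $G$. Setting $U:=[\,x_j\mid x_i\,]\in\mathbb{R}^{n\times2}$ and $W:=\mathrm{diag}(1,-1)$, one has $G-x_ix_i^T+x_jx_j^T=G+UWU^T$, and the matrix determinant lemma gives $\det(G+UWU^T)=\det(W)\,\det(G)\,\det(W^{-1}+U^TG^{-1}U)$. Since $\det(W)=-1$, $W^{-1}=W$, and $U^TG^{-1}U=\left(\begin{smallmatrix}\tau_j&\tau_{ji}\\\tau_{ij}&\tau_i\end{smallmatrix}\right)$, evaluating the $2\times2$ determinant $\det\!\left(\begin{smallmatrix}1+\tau_j&\tau_{ji}\\\tau_{ij}&\tau_i-1\end{smallmatrix}\right)$ and multiplying by $\det(W)=-1$ yields
\[
\det(G-x_ix_i^T+x_jx_j^T)=\det(G)\bigl(1-\tau_i+\tau_j-\tau_i\tau_j+\tau_{ij}\tau_{ji}\bigr).
\]
Equivalently, one may first apply the rank-one update $G\mapsto G+x_jx_j^T$, which is again positive definite, and then the rank-one downdate by $x_ix_i^T$, using Sherman--Morrison to express $x_i^T(G+x_jx_j^T)^{-1}x_i=\tau_i-\tau_{ij}\tau_{ji}/(1+\tau_j)$; this reaches the same formula without invoking the rank-two lemma.

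Finally, inserting this identity into $\det(G-x_ix_i^T+x_jx_j^T)\le\det(G)$ and dividing by $\det(G)>0$ gives $1-\tau_i+\tau_j-\tau_i\tau_j+\tau_{ij}\tau_{ji}\le1$, i.e.\ $\tau_j-\tau_i\tau_j+\tau_{ij}\tau_{ji}\le\tau_i$, which is exactly the claim (the case $j=i$ degenerating to $\tau_i\le\tau_i$). I do not expect a genuine obstacle here; the only points requiring care are the correct sign bookkeeping in the rank-two determinant lemma — or, in the alternative route, the correct Sherman--Morrison expansion of $(G+x_jx_j^T)^{-1}$ — together with the observation that $\det(G)$ stays strictly positive so that the inequality is not reversed when we cancel it.
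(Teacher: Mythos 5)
Your proposal is correct. The paper itself offers no proof of this lemma --- it simply defers to Claim~1 of the cited reference --- and your derivation via the termination condition $\det(G-x_ix_i^T+x_jx_j^T)\le\det(G)$ combined with the rank-two matrix determinant lemma (or, equivalently, the two successive Sherman--Morrison updates) is precisely the standard argument behind that claim: the sign bookkeeping in $\det(W)\det(W^{-1}+U^TG^{-1}U)$ is right, the resulting identity $\det(G-x_ix_i^T+x_jx_j^T)=\det(G)\bigl(1-\tau_i+\tau_j-\tau_i\tau_j+\tau_{ij}\tau_{ji}\bigr)$ is correct, and your observation that $\det(G)$ remains strictly positive throughout the algorithm (since $G$ is a positive semidefinite sum of outer products, nonsingular at initialisation, and the determinant only increases) justifies cancelling it without reversing the inequality. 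Nothing is missing.
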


We are now ready to state the worst-case approximation bound for approximate integer solutions computed using Algorithm \ref{alg:local-search-d-design}.

\vspace{0.1cm}

\begin{lemma}
  Let us denote by $\phi_D:= \log \det(X{U_{E}^*}X^T)$ where ${u_{E}^*}$ is an optimal solution of {Exact D-Optimal problem} \eqref{eq:ED} and $(G,I)$ denote the output of Algorithm \ref{alg:local-search-d-design}. Then
  \begin{equation} \label{eq:worst_case_bound}
      \log \det(\frac{1}{N}G) \geq -h(N,n) + \phi_{Rel}  \geq -h(N,n) + \phi_D,
  \end{equation}
  where $h(N,n)$ is a constant that depends only on $n$ and $N$.
\end{lemma}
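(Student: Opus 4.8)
The second inequality in \eqref{eq:worst_case_bound}, $\phi_{Rel}\ge\phi_D$, is immediate: the feasible set of \eqref{eq:ED} is obtained from that of the limit problem \eqref{eq:ED_no_integer_constraints} by \emph{adding} the integrality constraints $u_iN\in\mathbb{Z}$, so the optimal value $\phi_D$ of the more constrained exact problem cannot exceed the optimal value $\phi_{Rel}=g_0(u^*)$ of its relaxation. The substantive part is the first inequality $\log\det(\tfrac1N G)\ge -h(N,n)+\phi_{Rel}$, which is a worst-case guarantee for the design produced by the local search of Algorithm \ref{alg:local-search-d-design}; note that $\tfrac1N G$ is exactly the information matrix $X\,\mathrm{diag}(n_i/N)\,X^T$ of that design.

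I would start from Lemma \ref{lem:claim1}, the local-optimality certificate of the output $(G,I)$, namely $\tau_j-\tau_i\tau_j+\tau_{ij}\tau_{ji}\le\tau_i$ for all $i\in I$, $j\in S$. Set $W:=XU^*X^T=(H^*)^{-1}$ and $\alpha:=\mathrm{tr}(G^{-1}W)$. Multiplying the inequality of Lemma \ref{lem:claim1} by $u^*_j\ge 0$ and summing over $j\in S$, and using $\sum_{j\in S}u^*_j\,x_jx_j^T=W$ together with $\sum_{j\in S}u^*_j=1$, gives for each $i\in I$ the estimate
\[
\alpha-\tau_i\,\alpha+x_i^T G^{-1}WG^{-1}x_i\le\tau_i .
\]
Here it is essential that Lemma \ref{lem:claim1} involves only indices $j\in S$ and that, by \eqref{eq:primal_dual_on_support}, the restriction of $u^*$ to $S$ is still an optimal primal solution with the \emph{same} dual $H^*$ and the same value $\phi_{Rel}$; this is what makes running the search on $S$ rather than on $M$ lose nothing. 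Summing the displayed inequality over $i\in I$ counting multiplicities ($|I|=N$), and using the identities $\sum_{i\in I}\tau_i=\mathrm{tr}(G^{-1}G)=n$ and $\sum_{i\in I}x_i^TG^{-1}WG^{-1}x_i=\mathrm{tr}(G^{-1}WG^{-1}G)=\alpha$, yields $N\alpha-n\alpha+\alpha\le n$, i.e.
\[
\mathrm{tr}\big(G^{-1}XU^*X^T\big)=\alpha\le\frac{n}{N-n+1},
\]
which is meaningful since $N\ge n$ under Assumption \ref{ass:m,n}.

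The last step turns this trace bound into a determinant bound. The matrix $W^{1/2}G^{-1}W^{1/2}$ is positive definite with eigenvalues $\mu_1,\dots,\mu_n>0$ satisfying $\sum_k\mu_k=\alpha$ and $\prod_k\mu_k=\det(W)/\det(G)$; the arithmetic--geometric mean inequality gives $\det(W)/\det(G)\le(\alpha/n)^n\le(N-n+1)^{-n}$, hence $\log\det(G)\ge n\log(N-n+1)+\log\det(W)$. Since $\log\det W=g_0(u^*)=\phi_{Rel}$ and $\log\det(\tfrac1N G)=\log\det(G)-n\log N$, this establishes \eqref{eq:worst_case_bound} with $h(N,n)=n\log\!\big(\tfrac{N}{N-n+1}\big)$, which depends only on $n$ and $N$.

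The only genuine difficulty is conceptual rather than technical: one must recognise that the combinatorial argument of \cite{madan2019combinatorial}, stated there for the full candidate set, transfers verbatim to the restricted set $S$ precisely because of the primal--dual equality \eqref{eq:primal_dual_on_support} --- the dual $H^*$ is unchanged, and summing over $j\in S$ with weights $u^*_j$ reproduces exactly the matrix $W=XU^*X^T$ that would appear when searching over all of $M$. Beyond that, the proof is the two-stage summation (over $j\in S$ with weights $u^*_j$, then over $i\in I$ with multiplicities) followed by a one-line AM--GM estimate; the only place requiring care is the bookkeeping of the trace identities $\mathrm{tr}(G^{-1}G)=n$ and $\mathrm{tr}(G^{-1}WG^{-1}G)=\mathrm{tr}(G^{-1}W)$.
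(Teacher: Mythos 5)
Your proof is correct and arrives at the same constant $h(N,n)=n\log\bigl(\tfrac{N}{N-n+1}\bigr)$ as the paper, but the route to the first inequality is genuinely different. The paper imports the pointwise bound $\tau_j\le \tfrac{n}{N-n+1}$ for all $j\in S$ (Lemma 9 of \cite{madan2019combinatorial}) and uses it to show that $\tfrac{N}{\alpha}G^{-1}$ with $\alpha=\tfrac{N}{N-n+1}$ is feasible for the dual problem in \eqref{eq:primal_dual_on_support}; weak duality then yields $\phi_{Rel}\le n\log\alpha+\log\det(\tfrac{1}{N}G)$. You instead work directly from Lemma \ref{lem:claim1}, average it against the optimal weights $u_j^*$ to obtain the aggregated trace bound $\mathrm{tr}\bigl(G^{-1}XU^*X^T\bigr)\le \tfrac{n}{N-n+1}$, and convert that into a determinant comparison with $W=XU^*X^T$ via AM--GM, using $\log\det W=\phi_{Rel}$. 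Your version is more self-contained -- it re-derives the essential content of the cited Lemma 9 in aggregated form and bypasses the dual-feasibility/weak-duality step -- at the cost of the extra bookkeeping with $W^{1/2}G^{-1}W^{1/2}$; the paper's version is shorter because it delegates the combinatorial work to the cited lemma and reuses the duality machinery already set up in \eqref{eq:primal_dual_on_support}. Both arguments correctly hinge on the same observation that restricting to $S$ loses nothing, since $u^*$ restricted to $S$ remains optimal with the same dual $H^*$ and value $\phi_{Rel}$, and both need $N\ge n$ (guaranteed by Assumption \ref{ass:m,n}) for $N-n+1>0$.
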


\begin{proof}
    Given $G$, the strategy of the proof is based on exhibiting a feasible solution of the dual problem in \eqref{eq:primal_dual_on_support} that will be used to obtain the desired result. Let us consider $Y= \frac{\alpha}{N} G$. We have $Y^{-1}=\frac{N}{\alpha}G^{-1}$ and suppose that we want to find a constant $\alpha$ s.t. $x_j^TY^{-1}x_j = \frac{N}{\alpha}x_j^TG^{-1}x_j \leq n$ for all $j \in S$. In that case, we would have indeed, 
    \begin{equation*}
        \phi_{Rel} \leq f_0(\frac{N}{\alpha}G^{-1})= - \log \det(\frac{N}{\alpha}G^{-1})=  \log \det(\frac{\alpha}{N}G) = n \log({\alpha}) +\log \det(\frac{1}{N}G).
    \end{equation*}
 Therefore, calculating an upper bound on $\tau_j$ would be useful when finding an $\alpha$ which satisfies the desired condition, i.e., we would like to bound $\max_{j \in S}x_j^TG^{-1}x_j$. Such a bound is provided in Lemma 9 of \cite{madan2019combinatorial}:
    
\begin{equation*}
    \tau_j \leq \frac{n}{N-n+1}.
\end{equation*}
{Since} we want $\alpha$ s.t. $\frac{N}{\alpha}\frac{n}{N-n+1} \leq n$, i.e., $\alpha \geq \frac{N}{N-n+1} \geq 1$, we can set $h(N,n):=n\log(\frac{N}{N-n+1}) $ and observe that $\phi_{Rel} \geq \phi_D$ {to complete the proof}.

\end{proof}

\begin{remark} \label{rem:difficulty}
    It is easy to see that if Algorithm \ref{alg:local-search-d-design} uses the whole dataset $M$ rather than $S$, as proved in \cite{madan2019combinatorial}, one can obtain exactly the same approximation bound as stated in \eqref{eq:worst_case_bound}.  
    Moreover, \eqref{eq:worst_case_bound} confirms the very well-known fact that the exact D-optimal design problem is more difficult to solve when $N \approx n$ and easier when $N \gg n$ as, in this case, the local search algorithm would indeed produce solutions having very good approximation bounds since $\lim_{N \to \infty } \frac{N}{N-n+1} = 1$.

\end{remark}

\vspace{0.5cm}

\begin{corollary} \label{cor:worst_case estimate}
If $(G,I)$ is the output of Algorithm \ref{alg:local-search-d-design} and ${u_{E}^*}$ is an optimal solution of \eqref{eq:ED}, then    
\begin{equation*}
\det(G) \geq (\frac{N-n+1}{N})^n \det(XN{U_{E}^*}X^T).    
\end{equation*}
\end{corollary}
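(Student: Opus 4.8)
The plan is to obtain the corollary directly from the worst-case bound \eqref{eq:worst_case_bound} by exponentiating and carefully bookkeeping the scaling factors. First I would recall the explicit value of the constant established in the proof of the preceding lemma, namely $h(N,n)=n\log\!\bigl(\tfrac{N}{N-n+1}\bigr)$, and substitute it into \eqref{eq:worst_case_bound}, together with $\phi_D=\log\det(XU_I^*X^T)$ and the second inequality $\phi_{Rel}\ge\phi_D$ therein. This yields
\[
\log\det\!\Bigl(\tfrac1N G\Bigr)\;\ge\;-n\log\!\Bigl(\tfrac{N}{N-n+1}\Bigr)+\log\det(XU_I^*X^T).
\]

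Next I would clear the $N$-scaling on the left. Since $\log\det(\tfrac1N G)=\log\det G-n\log N$, moving $n\log N$ to the right-hand side and combining it with $-n\log\!\bigl(\tfrac{N}{N-n+1}\bigr)$ gives $\log\det G\ge n\log(N-n+1)+\log\det(XU_I^*X^T)$. Exponentiating, which preserves the inequality because $\exp$ is increasing, produces $\det G\ge (N-n+1)^n\det(XU_I^*X^T)$.

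Finally I would rewrite the right-hand side in the stated form: because $U_I^*=\operatorname{diag}(u_I^*)$ we have $XNU_I^*X^T=N\,XU_I^*X^T$, so by multilinearity of the determinant $\det(XNU_I^*X^T)=N^n\det(XU_I^*X^T)$, whence $(N-n+1)^n\det(XU_I^*X^T)=\bigl(\tfrac{N-n+1}{N}\bigr)^n\det(XNU_I^*X^T)$, which is exactly the claim. There is no genuine obstacle here — the corollary is merely the multiplicative restatement of \eqref{eq:worst_case_bound} — and the only points requiring care are the bookkeeping of the factors of $N$ (in particular not conflating $G=\sum_{i\in I}x_ix_i^T$, which already carries the multiplicities $n_i$, with $\tfrac1N G$) and checking that the inequality direction survives exponentiation.
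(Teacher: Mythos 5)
Your derivation is correct and is precisely the intended one: the paper states the corollary without proof as an immediate consequence of the lemma, and your substitution of $h(N,n)=n\log\bigl(\tfrac{N}{N-n+1}\bigr)$, exponentiation, and tracking of the $N^n$ factor via $\det(XNU_I^*X^T)=N^n\det(XU_I^*X^T)$ reproduces exactly the stated bound. No gaps.
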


\section{Numerical Results} \label{sec:numerics}
In this section, we present a series of numerical results to illustrate the efficiency and robustness of our proposal when compared to the state of the art methods. We will start showcasing the strength of our proposal in the fast identification of support for \textit{challenging datasets} for the MVEE problem, see Section \ref{sec:numerics_MVEE}, and then we will present numerical results concerning the computational performance of the approach proposed in this paper for the solution of large exact D-optimal design problems, see Section \ref{sec:exact-D_optimal-experiments}. Our implementation is written in \texttt{Matlab 2024b} and all the numerical results are obtained using \textit{Iridis X} equipped with \textit{2.35 GHz AMD 7452 Processor}. All the software is publicly available at \url{https://github.com/StefanoCipolla/D_Optimal_Design_Matlab}.  In the remainder of this section we discuss  the details about the datasets and our implementation. 

\subsubsection{Solution of the Restricted Problems and Stopping Criteria} \label{sec:solution RMP}
In Algorithm \ref{alg:column_generation} we use the following experimental setting. Given $\bar{M}$, every \eqref{eq:ED_Primal_Restricted}-\eqref{eq:ED_dual_Restricted} pair of problems is solved with SDPT3 \cite{MR1976479} using \texttt{OPTIONS.gaptol$=1e-9$}. Despite this being quite a strict tolerance, we noticed that such a parameter is mainly responsible for the fast identification of the support of the limit MVEE problem and largely justifies the use of a second-order method. Concerning the stopping criterion, we stop Algorithm \ref{alg:column_generation} when  $ z(\bar{H}^*) < toll$ with $toll = 1e-5$. Please note that this represents an \textit{absolute} stopping condition, i.e., not depending on the data of the problem, and is thus inherently stringent. 

\subsection{Datasets}  \label{sec:dataset}

In this work, we consider the following datasets:

\begin{itemize}
    \item \textbf{Synthetic}: The dataset is generated with the code used to produce the dataset in \cite{MR2376769}, where each instance is generated as a mixture of five Gaussians with random means and covariances, see \cite{MR2091768}. 
    We set $(n,m)\in \{ 10,20,30,40,50 \} \times \{100K, 1M, 10M\}$ and generate 10 instances for each couple of parameters $(n,m)$.  
    \item \textbf{Real World}:  We consider three large-scale instances from  UCI Dataset, see Table \ref{tab:problem_sizes} for the corresponding details.
    
\begin{table}[htbp]
  \centering
  \caption{Problem dimensions }
  \label{tab:problem_sizes}
  \begin{tabular}{lrr}
    \toprule
    Problem                 &     \(n\) &         \(m\) \\
    \midrule
    HIGGS               &       29   & 11\,000\,000  \\
    SUSY                &       19   &  5\,000\,000  \\
    SGEMM GPU    &       18   &    241\,600   \\
    \bottomrule
  \end{tabular}
\end{table}

\end{itemize}
Following the approach in \cite{MR4655115}, for any instance \(X \in \{\mathbf{Synthetic},\,\mathbf{Real\ World}\}\), we apply the $\sinh–\arcsinh$ transformation by setting

\[
X \longmapsto \sinh (\tfrac{1}{p}\,\arcsinh(X)).
\]
This transformation grants fine-grained control over the kurtosis of the dataset and, as shown in \cite[Sec.~3]{MR4655115}, directly affects the support size of the minimum volume enclosing ellipsoid (MVEE). And indeed, as also demonstrated in the following numerical results, empirically, the support size provides an effective measure of the problem’s difficulty. We consider $p \in \texttt{logspace}(0,2,5)$. Finally, it is important to note that in all the subsequent figures, the label \texttt{``Kurtosis''} will indicate the average ($\log$)kurtosis computed component-wise, see \cite[Sec. 3.1]{MR4655115}.

\subsection{The MVEE problem} \label{sec:numerics_MVEE}

The experimental results presented in this section are designed to demonstrate the computational advantages of our proposed methodology for solving Minimum Volume Enclosing Ellipsoid (MVEE) problems relative to existing state-of-the-art solvers. Specifically, our evaluation focuses on establishing the superior performance characteristics of the column generation approach, Algorithm \ref{alg:column_generation}, in terms of both computational efficiency and algorithmic robustness when identifying support sets for computationally challenging problem instances.

The comparative analysis benchmarks our proposed method against a MVEE-tailored Frank-Wolfe-type algorithm {with away steps presented in \cite{MR2376769} and discussed in detail in Section \ref{sec:lit_rev}. (We refer to this particular variant of the more general Frank-Wolfe algorithm with away steps simply as  the \textit{FW method} in this section.)} To ensure fair and meaningful comparisons, both algorithmic implementations were developed using \texttt{Matlab}, with the stopping tolerance for the FW method set to $toll = 1e-5/n$. This tolerance parameter was specifically chosen to guarantee comparable final duality gaps between our proposed approach and the baseline method.

The comprehensive numerical results are presented in Figures \ref{fig:fw-cg-comparison} and \ref{fig:fw-cg-comparison-RW}, which detail the performance evaluation across \textbf{Synthetic} datasets ($m=10M$) and \textbf{Real World} datasets, respectively. The experimental framework incorporates two distinct data configurations: the upper panels of both figures present results obtained using datasets modified through the $\sinh–\arcsinh$ transformation with increasing average kurtosis scores from left to right, while the lower panels display outcomes for the \textit{original}, unmodified datasets.

Examination of the computational time comparisons, as illustrated in the first row subplots of each figure, reveals that the methodology proposed in this work consistently and substantially outperforms the FW approach across the evaluated problem instances. Particularly noteworthy is the performance advantage observed for problems with dimension $n=50$, where our approach achieves computational speedups of approximately two orders of magnitude for a significant proportion of the test problems. This substantial performance improvement is most pronounced for problem instances characterized by large support sets, as evidenced by the analysis of the row subplots labeled \texttt{``Support''} in the upper panels of the aforementioned figures. This phenomenon is particularly prevalent in cases where the observed kurtosis of the specific problem instance is relatively small, which corroborates and extends the experimental observations reported in \cite[Sec.~3]{MR4655115}.

Beyond computational efficiency gains, our approach also demonstrates superior solution quality characteristics. Analysis of the row subplots labelled as \texttt{``error''} reveals that the final duality gap achieved by the proposed method is, in general, substantially smaller than that obtained by the FW approach, indicating enhanced convergence properties and solution accuracy.

Finally, we would like to draw the reader's attention to the row subplots labelled as \texttt{``support''}. As the results presented in Figures \ref{fig:fw-cg-comparison} and \ref{fig:fw-cg-comparison-RW} demonstrate, the cardinality of the computed support for ColGen is, typically, greater than the cardinality of the computed support with FW, which should, in general, be attributed to the different mechanism the two approaches use to identify such set, see also the discussion at the beginning of Section \ref{sec:local_search}.

%

Collectively,  the results presented in Figures \ref{fig:fw-cg-comparison} and \ref{fig:fw-cg-comparison-RW} provide compelling experimental  evidence that the proposed Column Generation strategy, coupled with an Interior Point Solver SDPT3 \cite{MR1976479}, significantly outperforms FW across multiple performance indicators. Our approach, when applied to the \textit{large-scale} problems considered in this section, demonstrates superior computational efficiency, enhanced algorithmic robustness, improved solution quality, and notably more consistent performances that are less dependent on specific instances.

\begin{figure}[htbp]
  \centering
  \includegraphics[width=\textwidth]{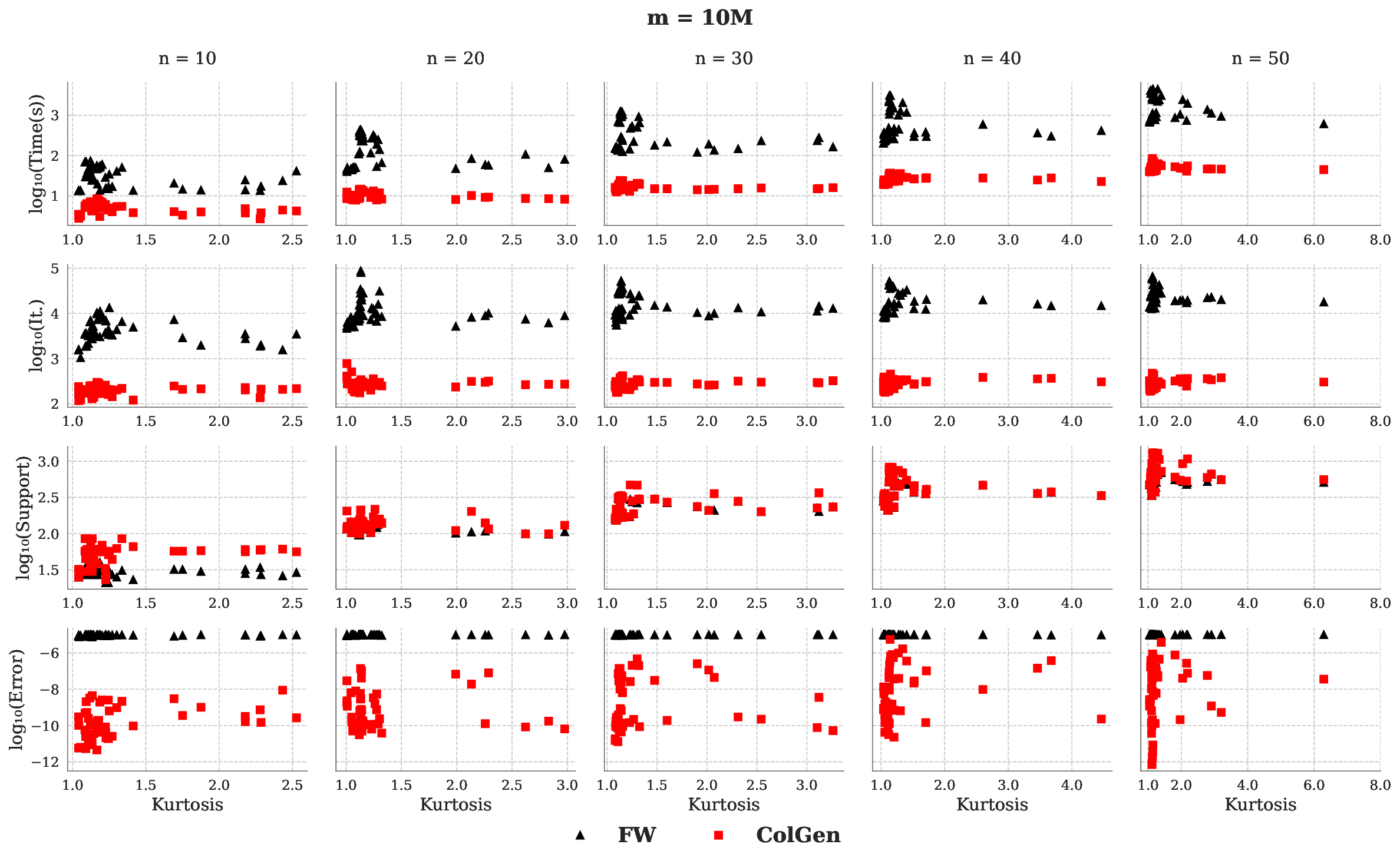}
  \includegraphics[width=\textwidth]{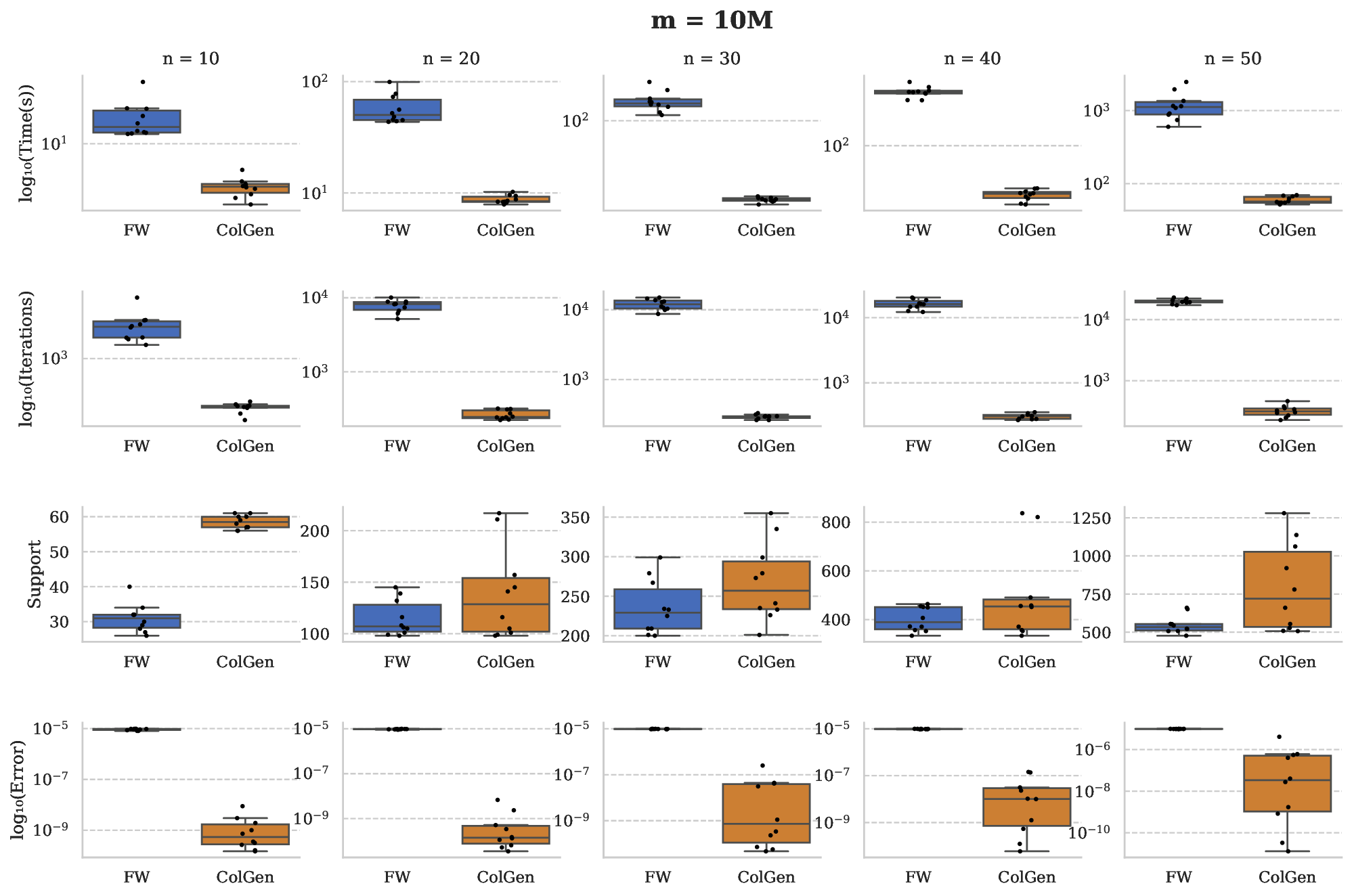}
  \caption{\textbf{Synthetic}: Comparison of FW vs.\ ColGen across metrics and problems. Upper panel: dataset modified using $\sinh–\arcsinh$ transformation, see Section \ref{sec:dataset}. Lower panel: original dataset.}
  \label{fig:fw-cg-comparison}
\end{figure}

\begin{figure}[htbp]
  \centering
  \includegraphics[width=\textwidth]{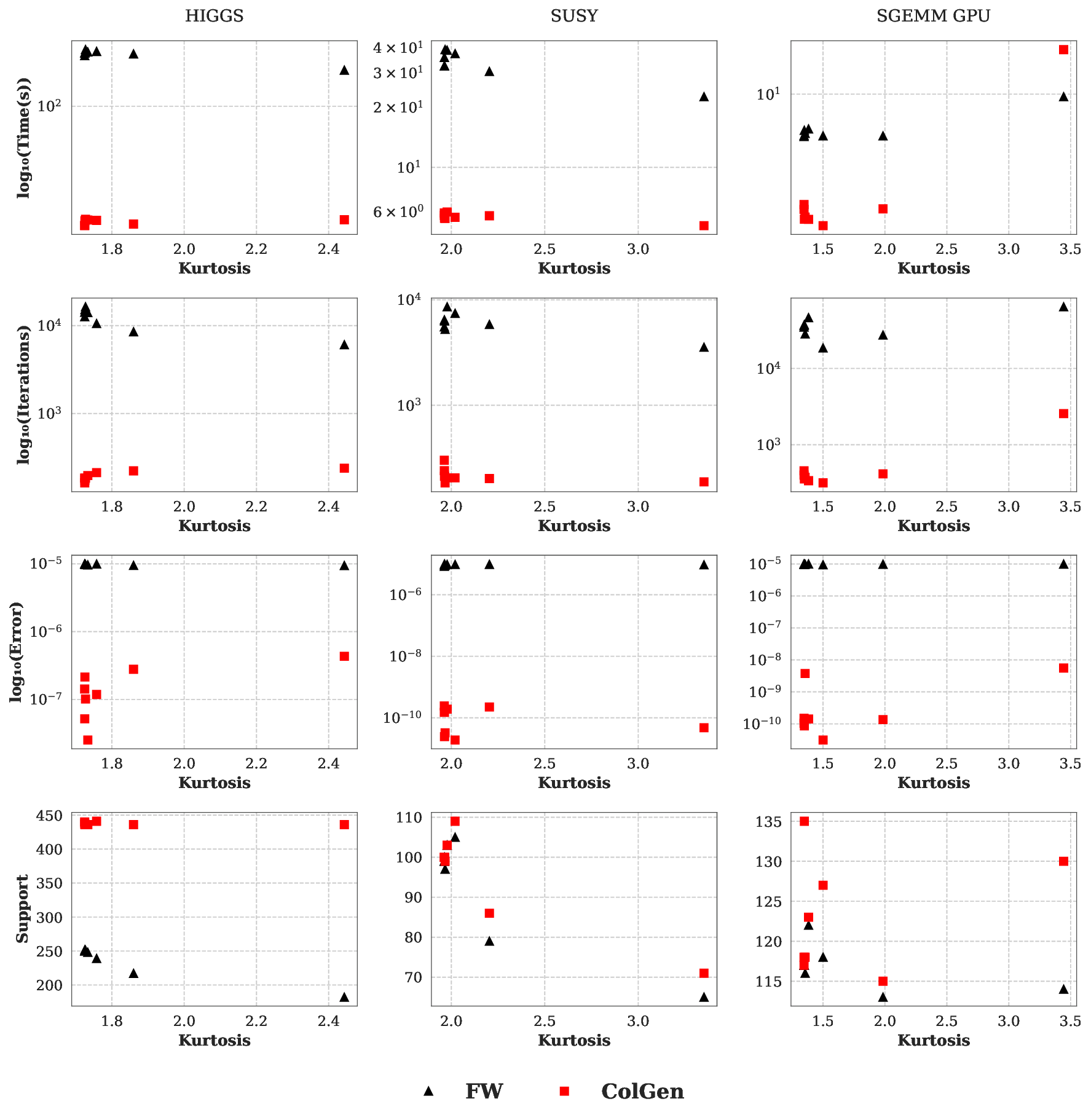}
  \includegraphics[width=\textwidth]{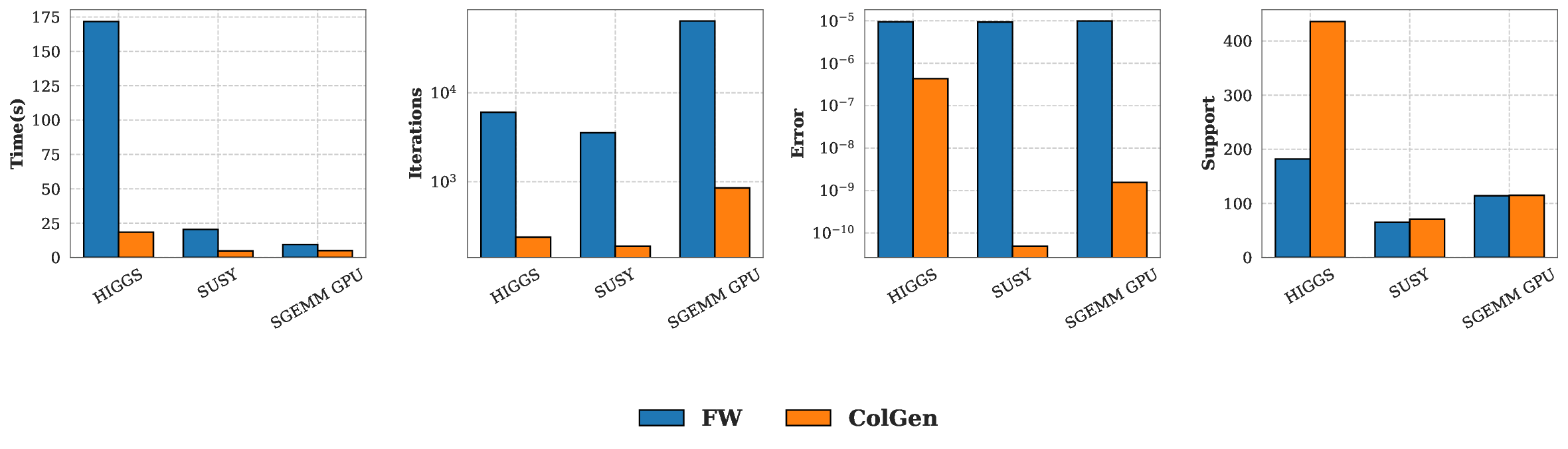}
  \caption{\textbf{Real World}: Comparison of FW vs.\ ColGen across metrics and problems. Upper panel: dataset modified using $\sinh–\arcsinh$ transformation. Lower panel: original dataset.}
  \label{fig:fw-cg-comparison-RW}
\end{figure}

{To conclude this section, we emphasise a critical distinction regarding the computational capabilities demonstrated by Algorithm \ref{alg:column_generation} when compared to the inner primal-dual solver, in this case SDPT3, when used as a stand-alone solver. Indeed, when SDPT3 is invoked directly as a monolithic solver on the full problem formulation \eqref{eq:ED_no_integer_constraints}-\eqref{eq:ED_dual_1}, the computational cost grows rapidly due to the Newton-based nature of Interior-Point Methods, which require the formation and solution of large linear systems per iteration. Consequently, the scale of instances that can be addressed through direct application of SDPT3 remains substantially smaller -- by several orders of magnitude -- than those considered in our experimental study. The computational tractability achieved in our approach stems fundamentally from the Column Generation framework presented in Algorithm \ref{alg:column_generation}, which decomposes the original large-scale problem into a sequence of substantially smaller restricted master problems that remain within SDPT3's computational reach. This observation is consistent with a more general computational experience 
with the use of interior point based Primal-Dual column generation technique \cite{MR2980569, MR3463543}.
To explicitly demonstrate this advantage, we present in Figure~\ref{fig:SDPT3-cg-comparison} a direct comparison between: (i) SDPT3 applied monolithically to problems of increasing dimension $m \in \{10n, 40n, 120n \}$, and (ii) SDPT3 integrated within our Column Generation framework on the same problem instances. The first row of such a figure showcases indeed how the average computational time needed by SDPT3 to solve the instances rapidly increases and, already for relatively small $m$, the Column Generation approach is one order of magnitude faster, while maintaining similar solution quality/properties (see the last two rows in Figure~\ref{fig:SDPT3-cg-comparison}). 
Moreover, comparing the first two rows reveals that, although the standalone application of SDPT3 generally requires fewer IPM iterations, it is consistently slower than the Column Generation approach, as assembling and solving the Newton systems imposes a prohibitive computational footprint. Hence, these results clearly illustrate how, while standalone SDPT3 encounters computational limitations beyond moderate problem sizes, the Column Generation approach extends the solver's applicability to datasets with millions of points. This decomposition strategy represents a core methodological contribution of our work: enabling high-accuracy Interior-Point-based solutions at scales that would otherwise be computationally intractable for direct SDP formulations of the MVEE problem.}

\begin{figure}[htbp]
  \centering
  \includegraphics[width=\textwidth]{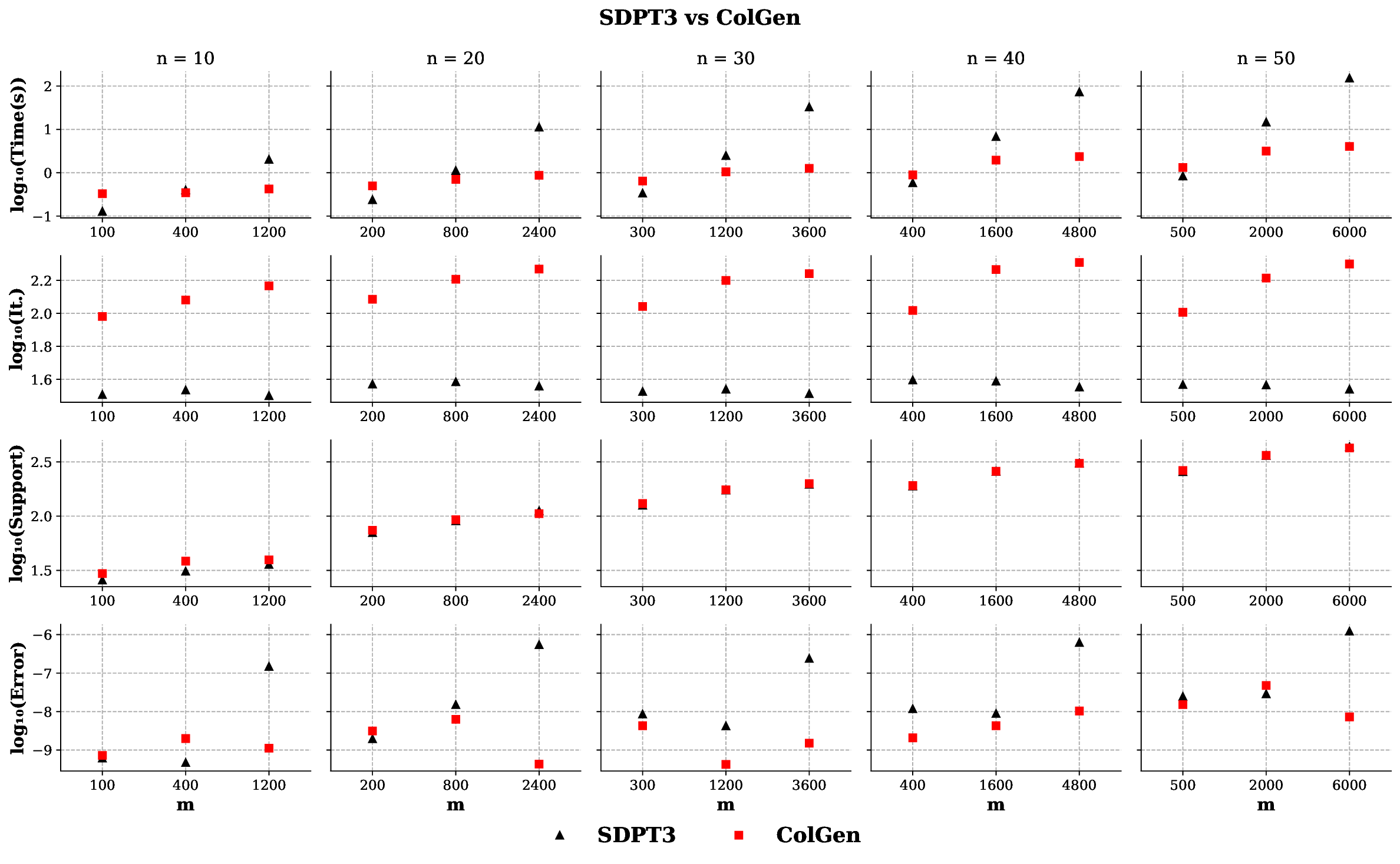}
  \caption{\textbf{Synthetic}: Comparison of SDPT3 vs.\ ColGen across metrics and problems.}
  \label{fig:SDPT3-cg-comparison}
\end{figure}

\subsection{Exact D-optimal Design} \label{sec:exact-D_optimal-experiments}

For the purposes of numerical experiments presented in this section, we consider Algorithm \ref{alg:best-local-search-d-design}, a refined and computationally more demanding -- yet parallelizable -- variant of Algorithm \ref{alg:local-search-d-design}. This version performs an exhaustive search over all possible single exchanges, selecting the one that yields the greatest improvement. The theoretical results developed in Section \ref{sec:bounds_local_search}, {in particular Corollary \ref{cor:worst_case estimate}, remain true also for}  this refined algorithm, as it shares the same termination condition {as Algorithm \ref{alg:local-search-d-design}.} The general algorithmic framework {proposed here} is represented by Algorithm \ref{alg:best-local-search-d-design} where the input set $S$ {can be generated by any method as long as the initial solution is in the domain of the objective function. We refer, interchangeably, as \textit{our proposal} or \textit{ColGen-LS}, to the variant where the input set is generated using Algorithm \ref{alg:column_generation} to obtain a solution $u^*$  of the MVEE and $I$ is initialized using the largest $N$ non-zero elements of $u^*$. The naming \textit{ColGen-LS} is indeed chosen to emphasize that the output is obtained with a combination of column generation and local search algorithms. For the sake of fairness, when we report the total computational time of the ColGen-LS, we always include the run time of Algorithm \ref{alg:column_generation} to find the input set.}

\begin{algorithm}
\caption{Best-improvement local search algorithm for $D$-design}
\label{alg:best-local-search-d-design}
\begin{algorithmic}[1]
\State \textbf{Input:} $S$, $I$ any multi-subset of $S$ of size $N$ such that
   $G = \sum_{i \in I} x_i\,x_i^T$ is a non-singular matrix.
\While{there exist $i \in I$ and $j \in S$ such that 
\[
   \det\bigl(G - x_i\,x_i^T + x_j\,x_j^T\bigr) > \det(G) 
\]}
    \State Choose $(i^\ast, j^\ast)$ that maximizes 
    \[
       \det\bigl(G - x_i\,x_i^T + x_j\,x_j^T\bigr) - \det(G)
    \]
    \State $G \gets G - x_{i^\ast}\,x_{i^\ast}^T + x_{j^\ast}\,x_{j^\ast}^T$
    \State $I \gets \bigl(I \setminus \{i^\ast\}\bigr) \cup \{j^\ast\}$
\EndWhile
\State \textbf{Output:} $(G, I)$
\end{algorithmic}
\end{algorithm}



{The purpose of the first series of experiments is to assess the quality of our proposal by comparing both the primal objective function values and computational time against the software from \cite{MR4774635} (denoted in the following by \textit{Boscia}) on medium-sized problem instances. We acknowledge, again, that the computational time comparison should be interpreted with appropriate caveats: Boscia is an exact solver that provides globally optimal solutions, whereas our proposal is a heuristic method with worst-case approximation guarantees. Nevertheless, for the sake of completeness, we report both metrics to demonstrate the practical performance of our approach.}
For Boscia, we use the default parameters. In  Figures  \ref{fig:boscia-cg-comparison1020} and \ref{fig:boscia-cg-comparison304050}, we report the results of numerical experiments obtained on synthetic problems created using the generator provided in the solver Boscia using the option \texttt{``correlated''} when $(n,m) \in \{10,20,30,40,50\} \times \{ 100,200,300,400,500 \}$ and when for every $(m,n)$ $10$ instances are generated. Broadly speaking, such a generator produces instances by sampling one Gaussian distribution with a randomly generated mean and covariance matrix. As highlighted by the upper panels of the aforementioned figures and as already observed in Section \ref{sec:prob_formulation}, the computational time for the solver Boscia (in black), in the case $N=n$, roughly increases by two orders of magnitude when $m$ goes from $100$ to $500$. It is also interesting to note how such a trend still remains true when $N>n$ and $n \in \{30,40,50\}$, suggesting that the problems become more and more difficult for the solver when the possible size of the support increases. For $n=40,50$ we capped the maximum allowed time to $1800$ seconds, and such a wall-time is usually hit by Boscia before producing a solution satisfying the required accuracy. On the other hand, our proposal (in red) is able to solve all the instances in less than $10$ seconds. {Notably}, in general, our proposal produces better or comparable objective functions, especially when $N=n$, see the lower panels of Figures \ref{fig:boscia-cg-comparison1020} and \ref{fig:boscia-cg-comparison304050}. {Hence, we explicitly note how such results systematically show that the } worst-case complexity reported in Corollary \ref{cor:worst_case estimate} is an over-pessimistic bound as the function values obtained by {ColGen-LS} are comparable with the function values computed using Boscia, which is, theoretically, an enumerative method and should return the \textit{true} optimizer. 

\begin{figure}[htbp]
  \centering
  \includegraphics[width=0.7\textwidth]{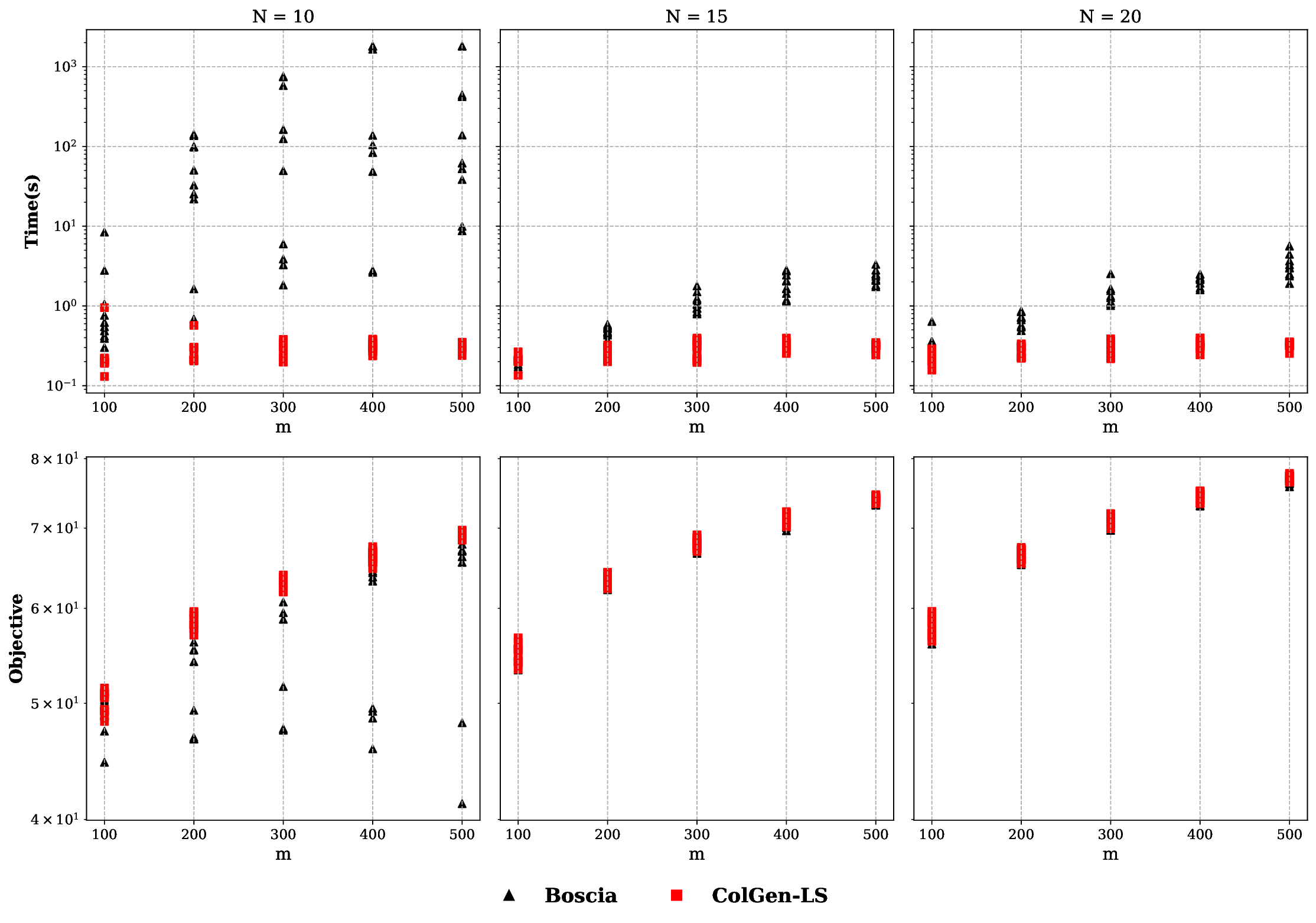}
  \includegraphics[width=0.7\textwidth]{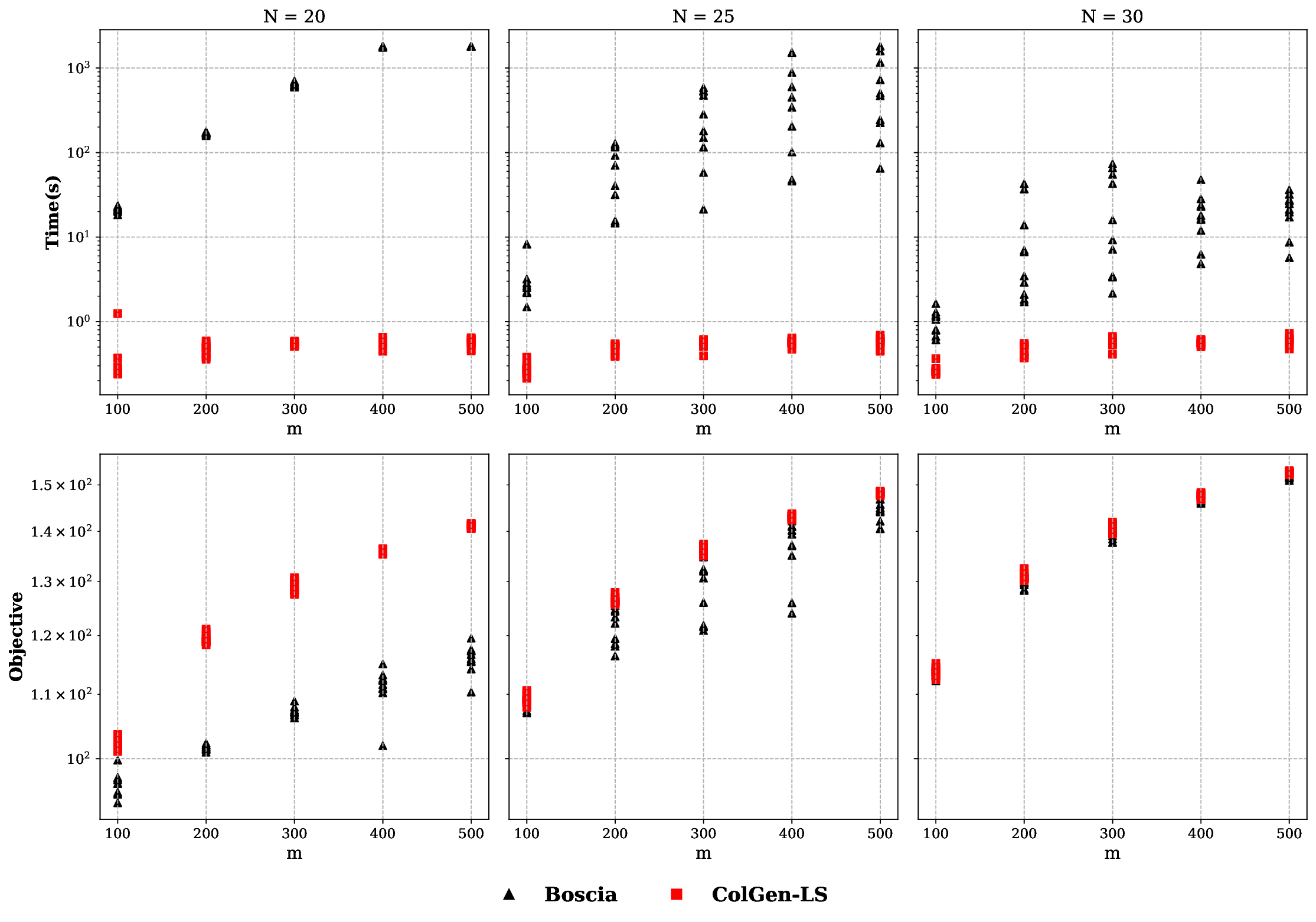}
  \caption{Top to bottom: $n=10,20$. \textbf{Synthetic} problems using generator from \cite{MR4774635} with \texttt{``correlated''} option.}
  \label{fig:boscia-cg-comparison1020}
\end{figure}



\begin{figure}[htbp]
  \centering
  \includegraphics[width=0.7\textwidth]{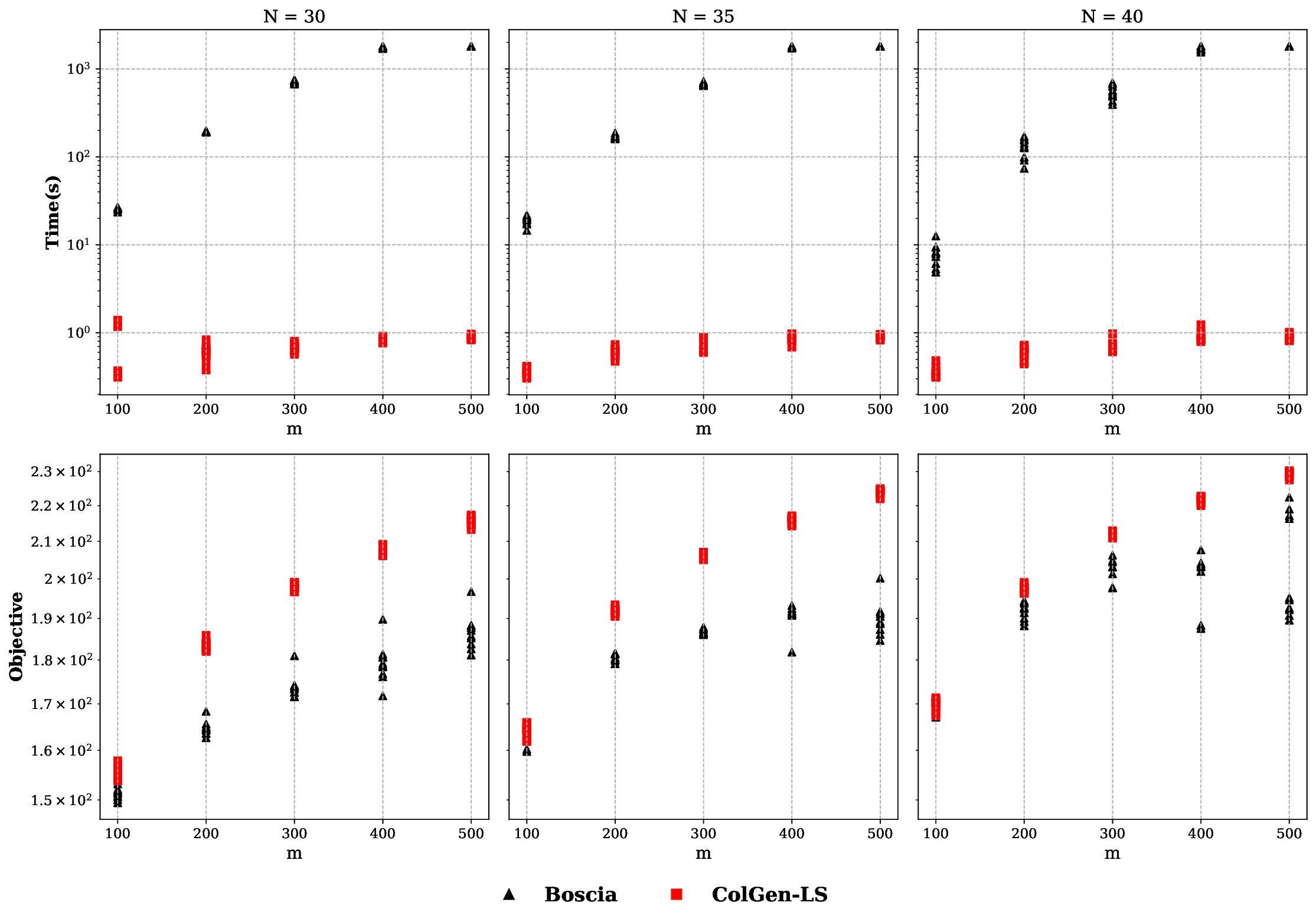}
  \includegraphics[width=0.7\textwidth]{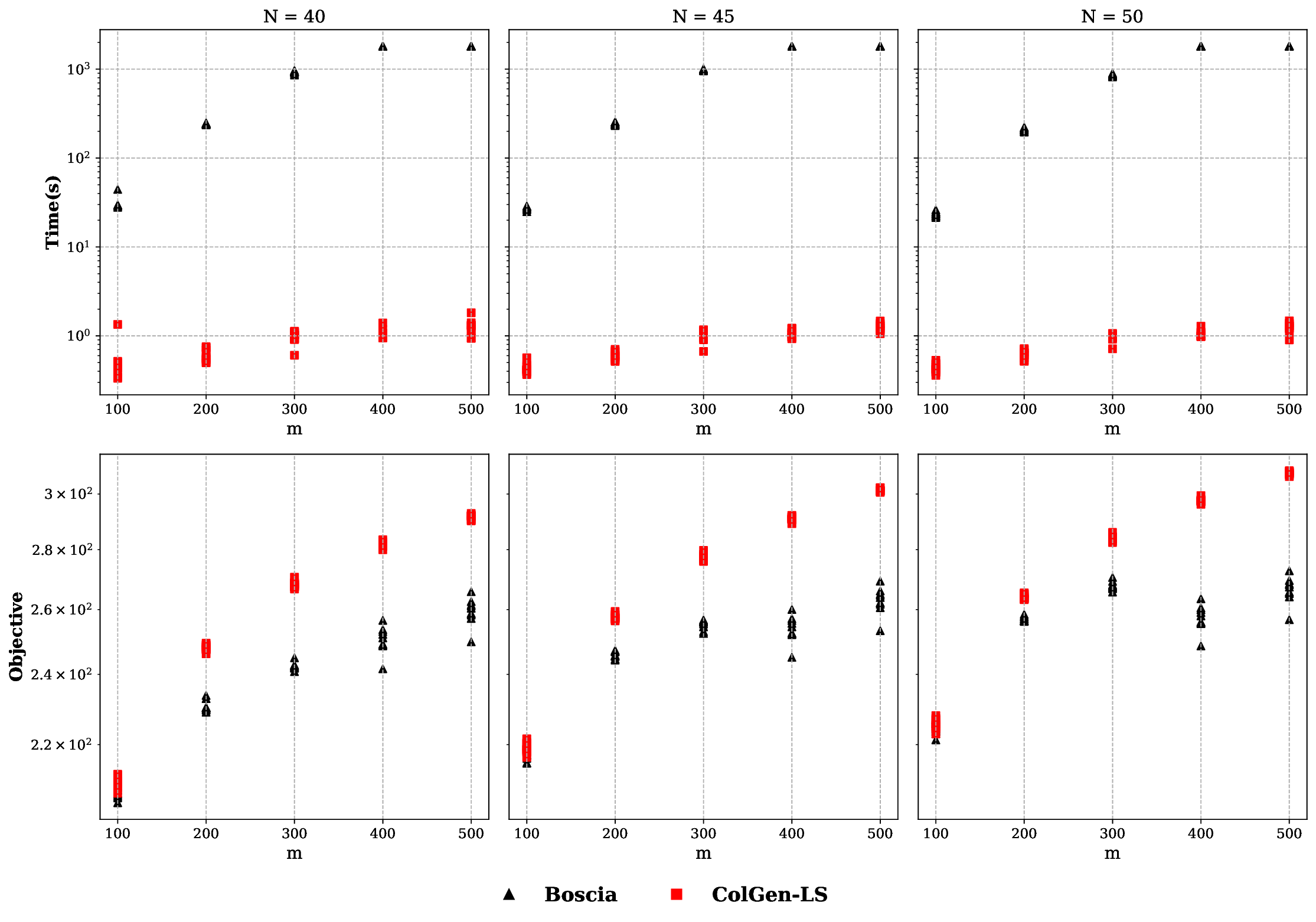}
  \includegraphics[width=0.7\textwidth]{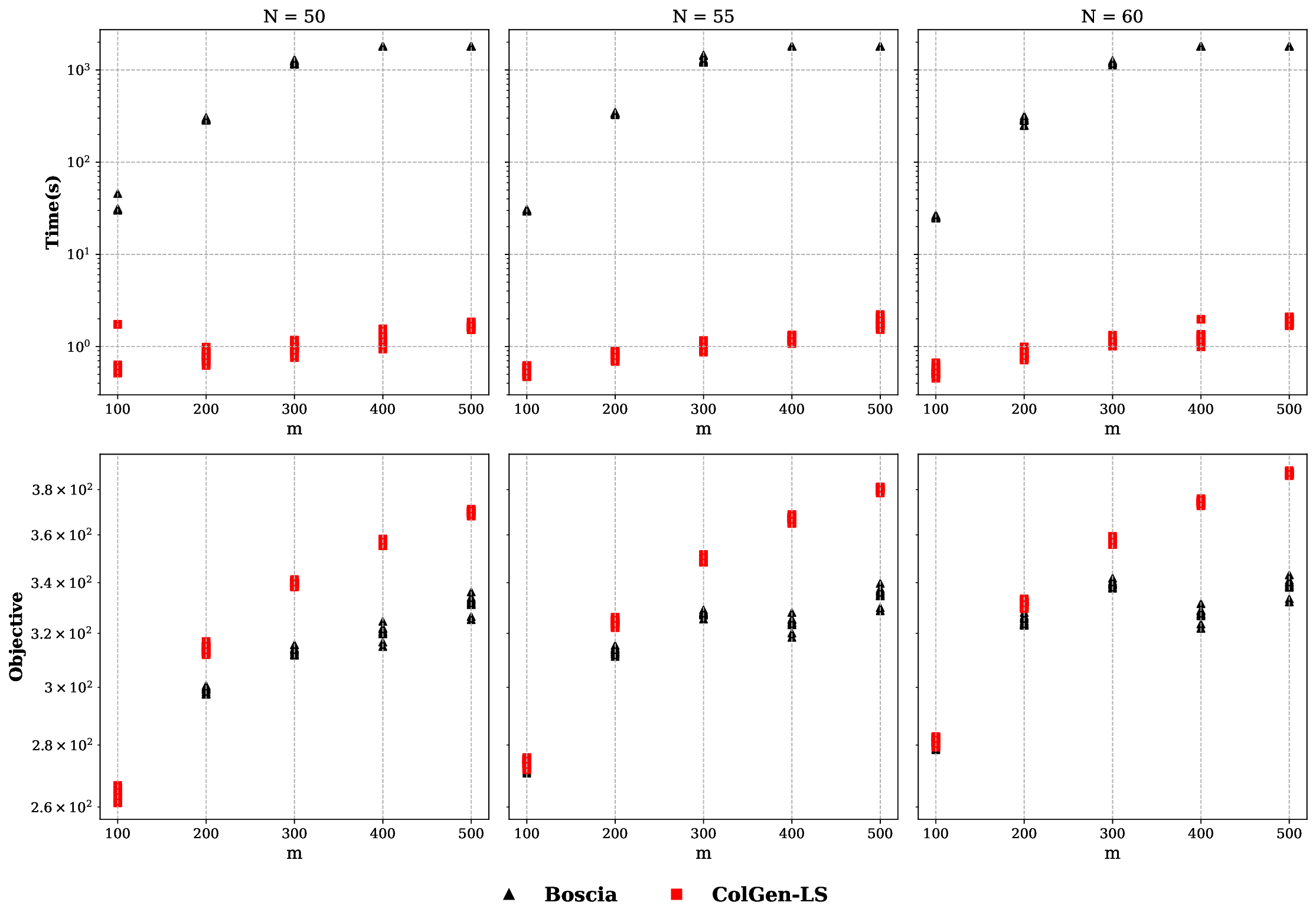}
  \caption{Top to bottom: $n=30,40,50$. \textbf{Synthetic} problems using generator from \cite{MR4774635} with \texttt{``correlated''} option.}
  \label{fig:boscia-cg-comparison304050}
\end{figure}


In the second experiment, see Figures  \ref{fig:locsearch-FW_cg-comparison1M} and \ref{fig:cg-RW-Kurtosis}, we compare FW and {ColGen} when used to generate the initial support $S$ in Algorithm \ref{alg:best-local-search-d-design} on \textbf{Synthetic} and \textbf{Real World} datasets. {We refer to these variants as FW-LS and ColGen-LS, respectively, as above.}
In particular, in order to offer an alternative measure for evaluating the quality  of the computed integer solution, we will use the following \textit{gap} measure, an analogue of the \textit{relative} MIP gap for Mixed Integer Programming, i.e., given $G$ output of Algorithm \ref{alg:best-local-search-d-design} and $u^*$  solution of \eqref{eq:ED_no_integer_constraints}, we define 
\begin{equation*}
  gap:= \frac{\log(\det(XU^*X^T))-\log(\det(G))}{|\log(\det(XU^*X^T))|} .
\end{equation*}
Given ${u_{E}^*}$ solution of \eqref{eq:ED}, it is easy to see that $gap \geq 0$  and that
\begin{equation*}
gap \geq \frac{\log(\det(XU^*X^T))-\log(\det(XN{U_{E}^*}X^T))}{|\log(\det(XU^*X^T))|},   
\end{equation*}
where the right-hand side is the best achievable \textit{relative} gap for an integer solution.

The figures present the computational time (upper panel) and the optimality gap (lower panel) relative to the Column Generation {(ColGen-LS) and the Frank–Wolfe (FW-LS)} methods. As shown by these results, our proposed approach solves all instances within 150 seconds, whereas {FW-LS} remains consistently slower. Regarding the quality of the integer solutions, both frameworks typically exhibit a gap of the order of \texttt{1e-1}. Intriguingly, although FW generally identifies sparser supports than ColGen (cf. the discussion in Section \ref{sec:numerics_MVEE}), it still produces integer solutions whose gap is comparable to that delivered by ColGen. This finding is somewhat counterintuitive, since one would expect that applying the local search Algorithm \ref{alg:best-local-search-d-design} on a larger set $S$ would, in general, yield superior quality solutions. On the other hand, this is perfectly in line with the theory developed in Section \ref{sec:local_search}, basically postulating that the worst case bounds only depend on the objective value.

\begin{figure}[htbp]
  \centering
  \includegraphics[width=1.2\textwidth]{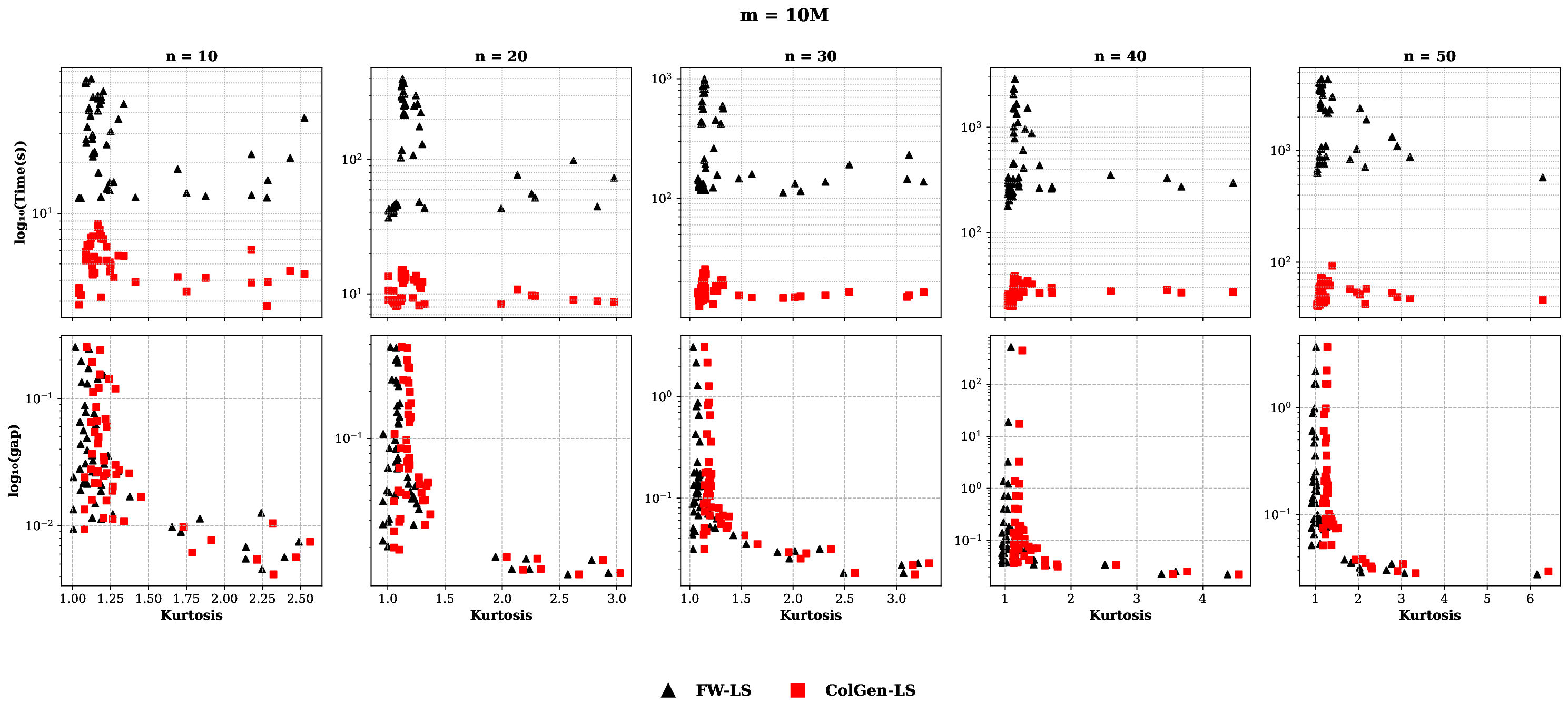}
  \includegraphics[width=1.18\textwidth]{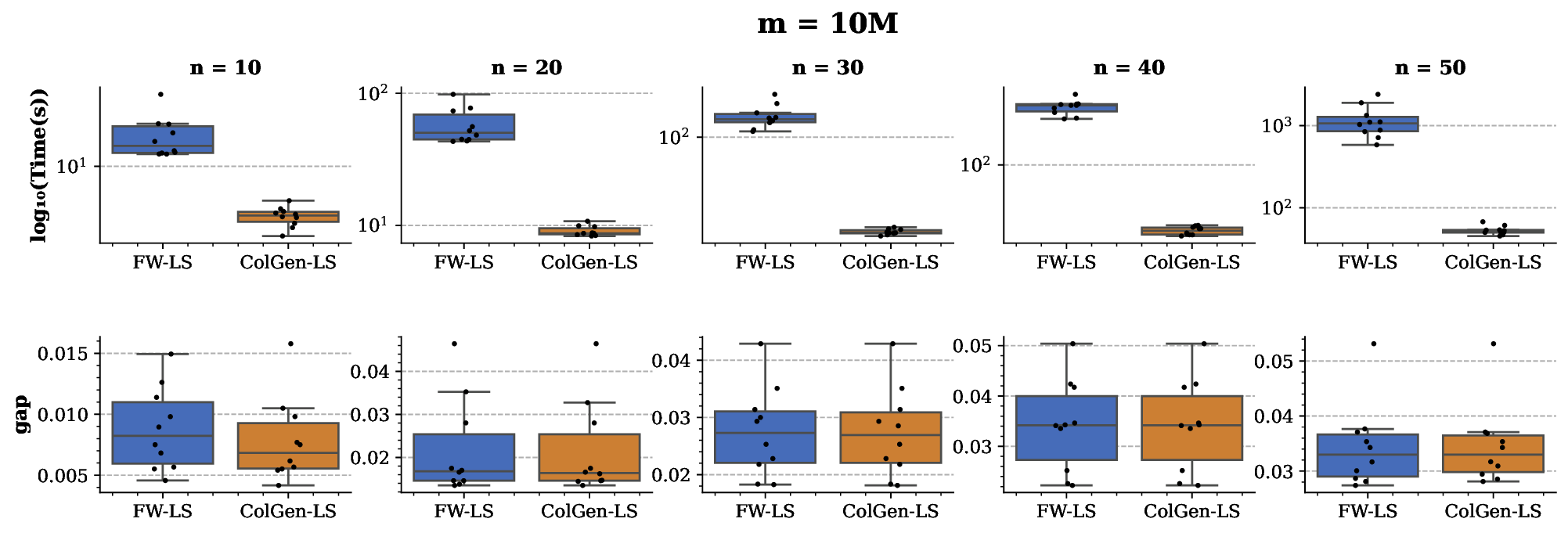}
  \caption{\textbf{Synthetic} dataset $m=10M$. Running time and gap values of Algorithm \ref{alg:best-local-search-d-design} with Algorithm \ref{alg:column_generation} (in red) and FW (in black). Upper panel: $\sinh-\arcsinh$ transformation. Lower Panel: \textit{original dataset}.}
  \label{fig:locsearch-FW_cg-comparison1M}
\end{figure}

\begin{figure}[htbp]
  \centering
  \includegraphics[width=\textwidth]{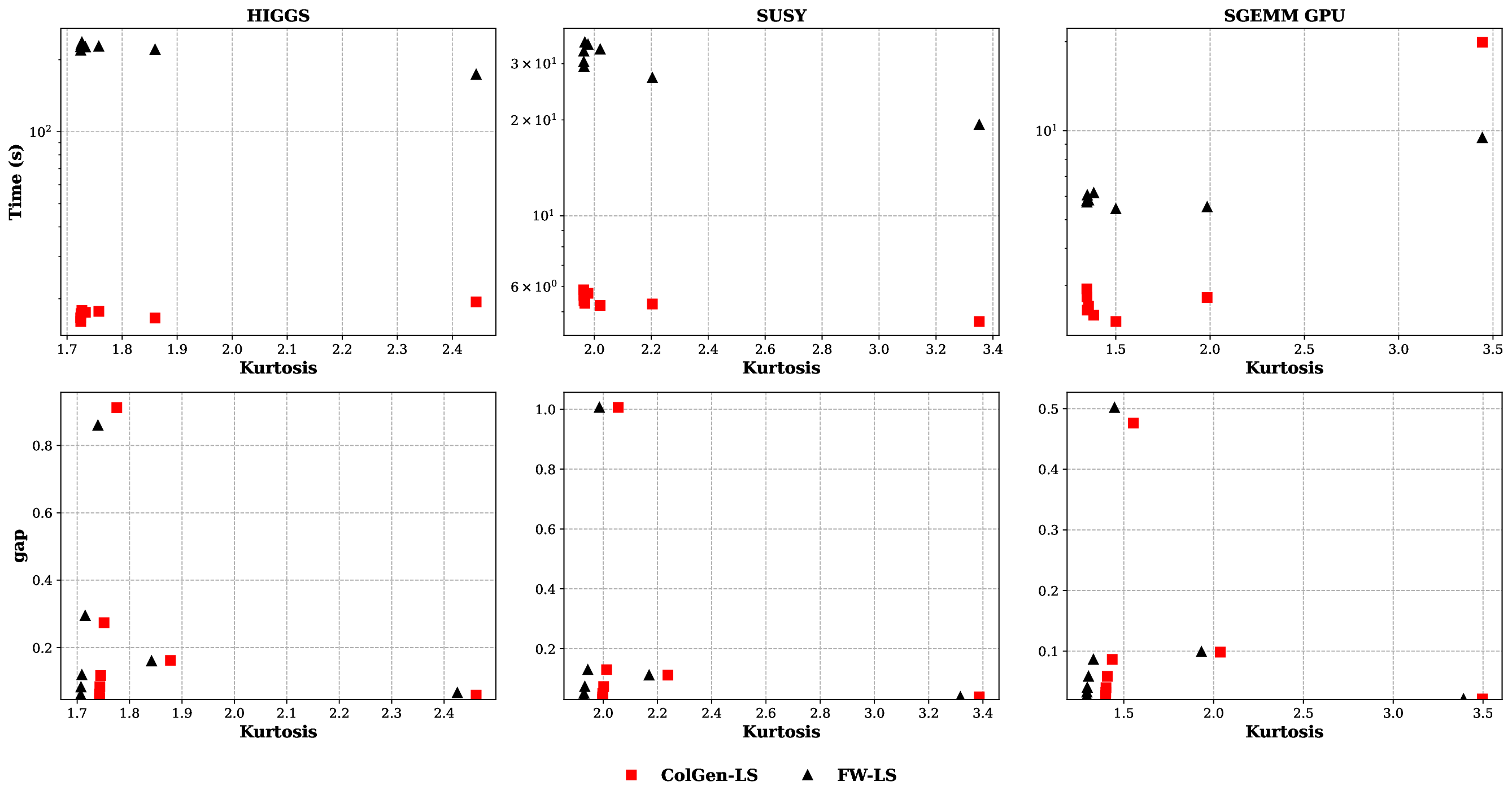}
  \caption{\textbf{Real World}: Running time and gap for FW and ColGen for $\sinh-\arcsinh$ transformed datase.}
  \label{fig:cg-RW-Kurtosis}
\end{figure}

{Moreover, {to assess the performance} of our approach w.r.t a \textit{purely greedy approach}}, we compare our proposal with Algorithm \ref{alg:best-local-search-d-design} when the input $S$ is the full dataset, i.e., without the preliminary support identification performed using Algorithm \ref{alg:column_generation}. {
In particular, to ensure a fair comparison, only in this experiment, the initialisation of the multi-set $I$ is handled as follows. When Algorithm \ref{alg:best-local-search-d-design} is applied with preliminary support identification via Algorithm \ref{alg:column_generation}, the starting point is chosen as a random multi-set drawn from the identified support (referred to as \texttt{ColGen-LS-RInit} in what follows). When no preliminary support identification is performed, $I$ is instead initialised as a random multi-set drawn from the full dataset (referred to as \texttt{LocalSearch-RInit} in what follows). For this experiment,} we restrict our analysis to the case $N=n$, as the experiments conducted and presented so far have shown that this regime includes the most challenging cases for the computation of exact D-optimal design solutions. {In Figure \ref{fig:locsearch-cg-comparison} we report the total computational time for five random initializations of $I$ for \texttt{ColGen-LS-RInit} and \texttt{LocalSearch-RInit} and the best obtained objective value. It is important to note that the reported computational time for \texttt{ColGen-LS-RInit}  includes the computational time needed by Algorithm \ref{alg:column_generation} for the identification of the support.} As highlighted in Figure \ref{fig:locsearch-cg-comparison}, \texttt{{ColGen-LS-RInit} outperforms \texttt{LocalSearch-RInit}} by two orders of magnitude in terms of computational time for the \textbf{Synthetic} dataset with $m=1M$ (cf., first and second rows subplots) while obtaining equal or better objective function values (cf., third and fourth rows subplots).
Moreover, the comparison of the reported computational times for $m=100K$ and $m=1M$ highlights that {also in this case, ColGen-LS-RInit} exhibits a limited dependence on $m$.
\begin{figure}[htbp]
  \centering
  \includegraphics[width=1.2\textwidth]{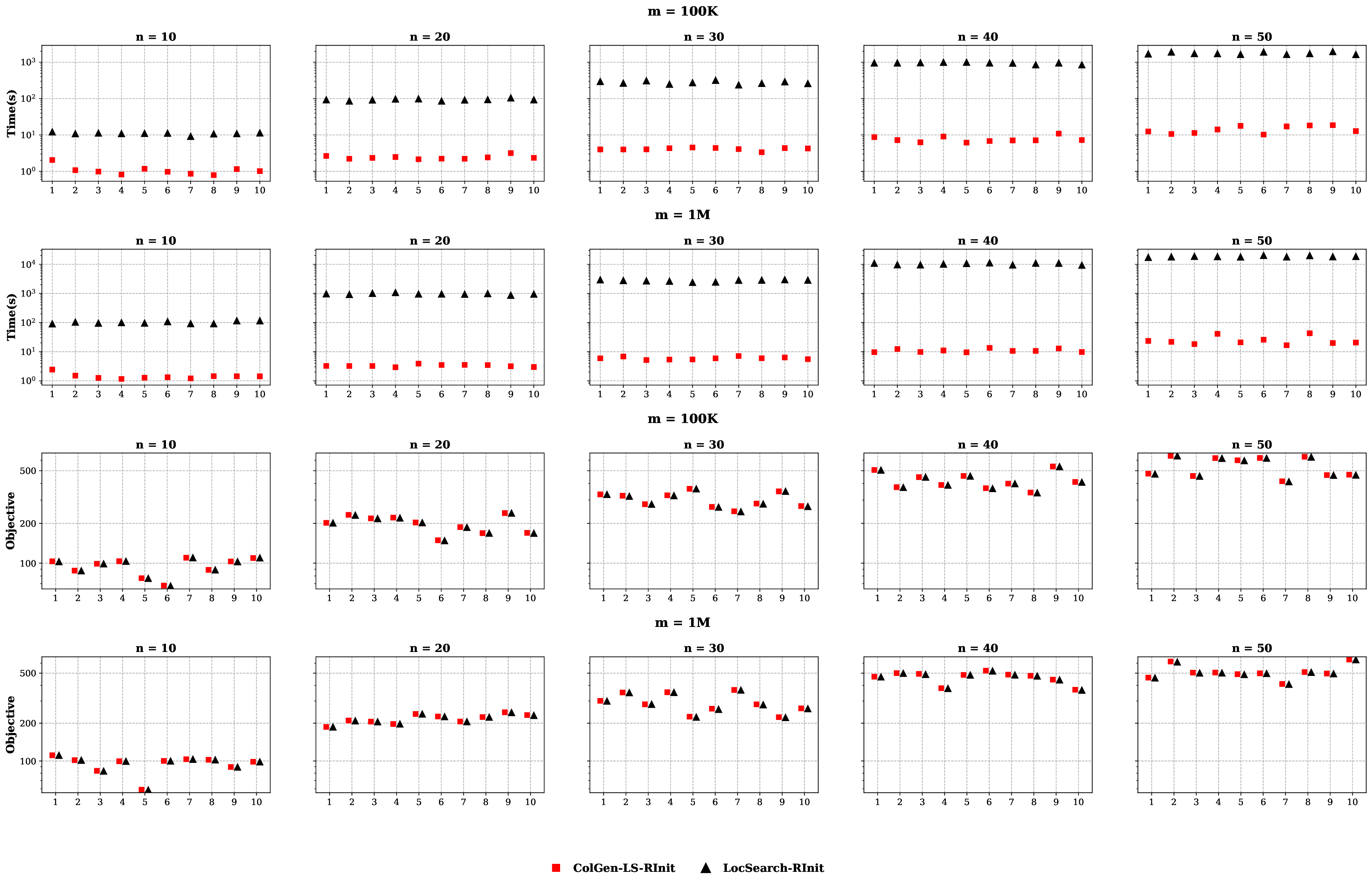}
  \caption{\textbf{Synthetic}: {case $N=n$. \texttt{ColGen-LS-RInit} uses a random multi-set drawn from the support identified by Algorithm \ref{alg:column_generation}, while \texttt{LocalSearch-RInit} uses a random multi-set drawn from the full dataset. Total running time and best objective function for five random initialization obtained using Matlab's function \texttt{randperm}.} }
  \label{fig:locsearch-cg-comparison}
\end{figure}

{
Finally, we compare ColGen-LS with methods specifically designed for high-dimensional problems that leverage statistical principles. Figure \ref{fig:locsearch_Iboss-comparison1M} presents a comparative analysis of our proposal with \texttt{IBOSS} \cite[Alg. 1]{MR3941263}. This comparison is particularly pertinent in this context, as IBOSS implements D-optimal-driven subsampling \cite{MR3941263} and has demonstrated superior performance relative to classical subsampling methods w.r.t. the \textit{statistical properties} of the resulting solution. To ensure a fair comparison, we employ computational time and solution $gap$ as our evaluation metrics. We note explicitly, moreover, that IBOSS imposes a structural constraint whereby the minimum experiment size is $N=2n$. The numerical results presented in Figure \ref{fig:locsearch_Iboss-comparison1M} correspond to this choice of $N$, as larger values would yield insufficiently challenging test problems, as discussed extensively in Remark \ref{rem:difficulty} and corroborated by Corollary \ref{cor:worst_case estimate}. The results summarized in Figure \ref{fig:locsearch_Iboss-comparison1M} demonstrate that while IBOSS consistently achieves the best computational time, it produces solutions with optimality gaps that are roughly one order of magnitude larger than those obtained by ColGen.} 

\begin{figure}[htbp]
  \centering
  \includegraphics[width=1.2\textwidth]{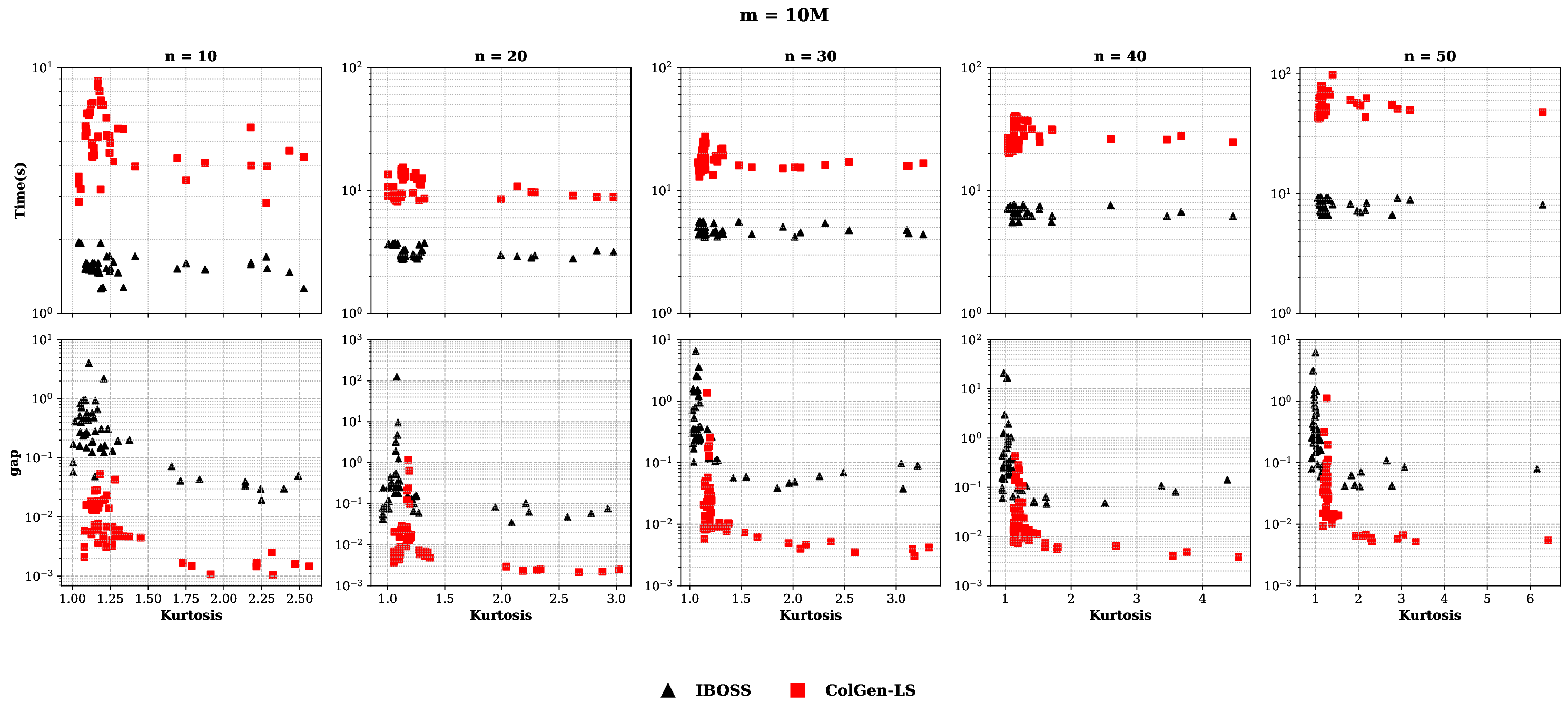}
  \includegraphics[width=1.18\textwidth]{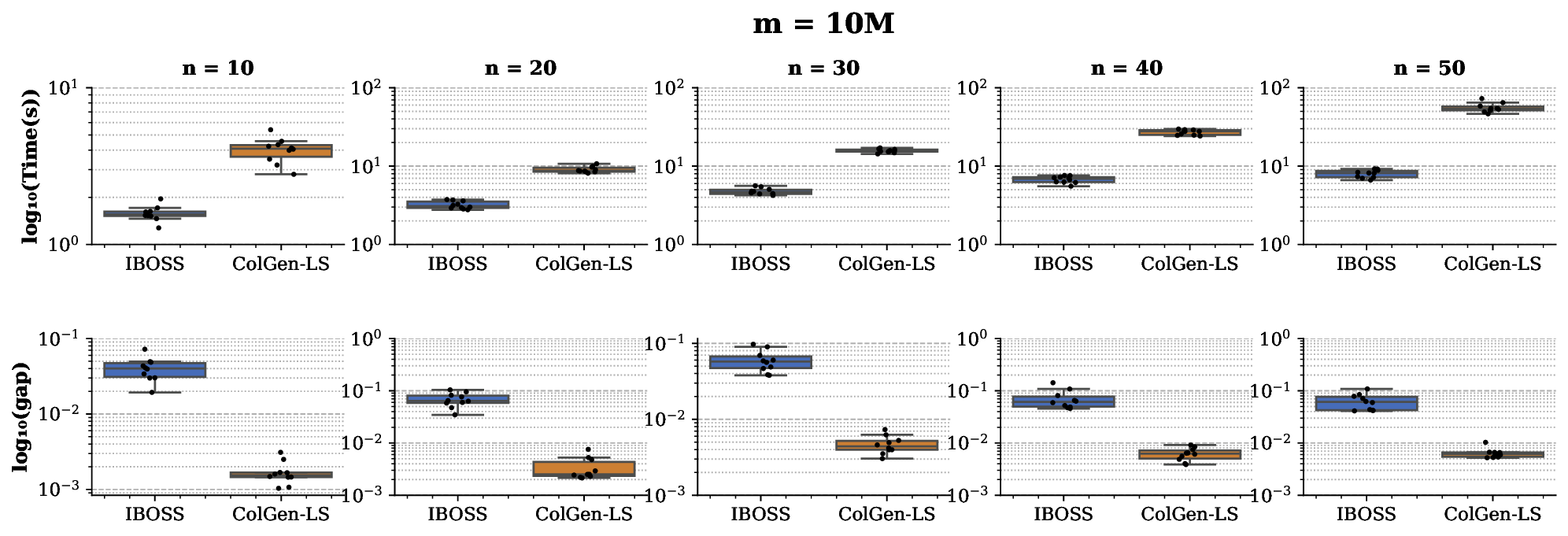}
  \caption{$N=2n$. \textbf{Synthetic} dataset $m=10M$. Running time and gap values of Algorithm \ref{alg:best-local-search-d-design} with Algorithm \ref{alg:column_generation} (in red) and IBOSS (in black). Upper panel: $\sinh-\arcsinh$ transformation. Lower Panel: \textit{original dataset}.}
  \label{fig:locsearch_Iboss-comparison1M}
\end{figure}

\section{Conclusions \textcolor{red}{and future work}}

In this work, we have addressed the longstanding challenge of computing exact D-optimal experimental designs in the regime where the number of candidate points far exceeds the dimensionality of the regression model and the total number of allowed experiments is close to the number of parameters to be estimated, i.e., $N\approx n$. Building on the duality between the D-optimal design problem and the minimum-volume enclosing ellipsoid (MVEE) problem, we have introduced a hybrid column-generation framework that integrates the following key elements:
\begin{itemize}
    \item \textit{Rapid Support Identification via Column Generation and Interior-Point SDP}. This is achieved by formulating the continuous relaxation, the \textit{limit problem}, as a primal–dual pair and by deploying a column-generation strategy -- Algorithm \ref{alg:column_generation} -- to identify, in an iterative fashion, the small support set that carries all of the mass of the continuous optimum.  At each iteration, we solve a restricted master problem over a modest-sized subset of candidate points using a high-accuracy, primal–dual Interior-Point SDP solver (SDPT3). 

\item \textit{Restricted Local Search with Provable Bounds}. Once the support $S$ of the \textit{limit problem} has been determined, we invoke a greedy local‐search algorithm -- Algorithm \ref{alg:local-search-d-design} -- that operates using only points of $S$.  By leveraging the bound provided by the optimiser of the \textit{limit problem}, we prove that restricting the local search to $S$ yields an exact design whose worst‐case approximation factor to the optimum is identical to that obtained when running the same local‐search procedure on the entire dataset.  Crucially, this means that no theoretical guarantee is sacrificed: performing a local search restricted to the support of the \textit{limit problem}  yields the same worst-case bound in $N$ and $n$ as in recently proposed analyses, but at a fraction of the computational cost.

\item \textit{Extensive Numerical Validation on Synthetic and Real-World Data}.
   We benchmark our proposal for the solution of MVEE against a state-of-the-art Frank–Wolfe‐type algorithm on very large synthetic datasets and on large UCI real-world datasets (up to $m=10^7$).  Across all instances, including those modified to be more challenging, our proposal identifies the continuous support tens to hundreds of times faster than FW, attains smaller duality gaps, and exhibits far fewer iterations, thereby demonstrating both superior convergence and robustness.  Moreover, we compare our full pipeline -- support identification followed by restricted local search -- against a state-of-the-art nonlinear-mixed-integer solver -- Boscia -- on medium-scale synthetic instances (up to $m=500$, $n=50$).  Even for the most challenging cases, i.e., when $N=n$, our method consistently produces exact designs of equal or better objective value in under $30$ seconds, whereas Boscia often fails to reach optimality within the $1800$ second time limit. Finally, we evaluated the impact of preliminary support identification on the efficiency of the local‐search procedure by conducting experiments on large‐scale synthetic datasets with $m=10^5$ and $m=10^6$ candidate points. In these experiments, our column‐generation–driven approach delivered up to a two‐order‐of‐magnitude reduction in runtime compared to a naive local‐search algorithm applied directly to the full dataset. Moreover, when applied to the UCI real-world dataset, our method consistently produced exact designs with a negligible mixed‐integer programming (MIP) optimality gap, demonstrating that the restricted local search confined to the support of the limit problem guarantees the quality of the solution while drastically reducing computational overhead.

\end{itemize}

In summary, by combining a column-generation viewpoint with second-order optimization techniques and a tailored local-search, we have demonstrated that exact D-optimal designs for large datasets with small to moderate number of features can be computed efficiently, reliably and accurately, also at scales several orders of magnitude larger than previously possible.  

\textcolor{black}{Looking forward, a natural and promising direction for future work is the extension of the proposed framework to A-optimal experimental design, which is the other popular criterion among Kiefer's. The duality theory underlying our column generation approach extends to A-optimality through the analysis carried out in \cite{MR4307385}, where a dual formulation and the corresponding optimality conditions are derived in a form structurally analogous to the MVEE duality exploited in Section~\ref{sec:ColumnGen}. This dual characterization provides the necessary pricing rule for an A-optimal column generation scheme, while elimination criteria for non-support points -- the A-optimal counterpart of the Harman--Pronzato condition used in Algorithm~\ref{alg:column_generation} -- are available in \cite{MR4193082}. Moreover, the worst-case approximation guarantees obtained in \cite{madan2019combinatorial} for local search algorithms on the full dataset have an A-optimal analogue with comparable degradation in the regime $N \approx n$, suggesting that the restricted local search argument of Section~\ref{sec:local_search} could be used for A-optimality. We chose to focus exclusively on D-optimality in this work for two reasons. First, the D-optimal limit problem admits a particularly clean SDP representation that is well-suited to high-accuracy primal--dual interior-point solvers such as SDPT3, whereas the A-optimal criterion has no native conic form and must be lifted via auxiliary Schur-complement blocks, increasing the computational footprint of each restricted master problem and warranting a dedicated implementation and benchmarking effort. Second, the benchmarking algorithms and characterization of more challenging datasets using kurtosis that we have used in our computational experiments (following the setup in \cite{MR4655115}) are not readily available for A-optimality. We therefore view the systematic adaptation of the proposed framework to A-optimality, together with a thorough numerical comparison against state-of-the-art mixed-integer methods and local search methods, such as those in \cite{MR4774635,madan2019combinatorial}, as a worthwhile and timely future research direction.
}

\section*{Acknowledgments}
The authors acknowledge the use of the IRIDIS High Performance Computing Facility and associated support services at the University of Southampton, in the completion of this work.

\section*{Statements \& Declarations}

\subsection*{Funding}
The authors declare that no funds, grants, or other support were received during the preparation of this manuscript.

\subsection*{Competing Interests}
The authors have no relevant financial or non-financial interests to disclose.

\subsection*{Data Availability}
The generators for the synthetic datasets are available for review and will be published upon publication at \url{https://github.com/StefanoCipolla/D_Optimal_Design_Matlab}. In addition, the Real World datasets are available for download from \url{https://archive.ics.uci.edu/datasets}.

\subsection*{Code Availability}
The full code is available for review. We remark that a set of packages were used in this study, that were either open source or available for academic use. Specific references are included in this published article.

\bibliography{sn-bibliography}

\end{document}